\numberwithin{equation}{section}
\newcommand{\CC}{\mathbb{C}}
\newcommand{\EE}{\mathbb{E}}
\newcommand{\PP}{\mathbb{P}}
\newcommand{\QQ}{\mathbb{Q}}
\newcommand{\TT}{\mathbb{T}}
\newcommand{\ZZ}{\mathbb{Z}}
\newcommand{\cal}{\mathcal}
\newcommand\cC{{\cal C}}
\newcommand\cD{{\cal D}}
\newcommand\cL{{\cal L}}
\newcommand\cN{{\cal N}}
\newcommand\cO{{\cal O}}
\newcommand\cP{{\cal P}}
\newcommand\cV{{\cal V}}
\newcommand\cW{{\cal W}}
\def\sta{{^{\ast}}}
\def\and{\quad{\rm and}\quad}
  \DeclareMathOperator{\Aut}{Aut}
 \DeclareMathOperator{\val}{val}
\DeclareMathOperator{\vdim}{vdim}
\def\vphi{\varphi}
\def\hra{\hookrightarrow}
\def\lra{\longrightarrow }
\def\bar{\overline}
\def\wtil{\widetilde}
\def\what{\widehat}
\def\deg{\mathrm{deg}\,}
\def\dim{\mathrm{dim}}
\def\rH{\mathrm{H}}
\def\c>{\succ}
\def\c<{\prec}
\def\l({\left(}
\def\r){\right)}
 \def\ZZ{\mathbb{Z}}
\newtheorem{prop}{Proposition}[section]
\newtheorem{lemm}[prop]{Lemma}
\newtheorem{rema}[prop]{Remark}
\newtheorem{defi}[prop]{Definition}
\def\dual{^{\vee}}
\def\Ob{{\mathcal Ob}}
\def\virt{^{\mathrm{vir}} }
\def\loc{_{\mathrm{loc}} }
\def\bL{\bold{L}}
\def\fl{\mathrm{fl}}
\def\ev{\mathrm{ev}}
\def\bx{\bold{x}}
\def\by{\bold{y}}
\newtheorem*{conv}{Convention}
\newtheorem*{caut}{Caution}
\newcommand{\set}[1]{\{ #1 \}}
\def\bL{{\bf{L}}}
\def\log{\mathrm{log}}
\title{Irregular vanishing on $\PP^2 \times \PP^2$}
\author[Chang]{Huai-Liang Chang}
\address{Institute for Math and AI, Wuhan University, Wuhan 430072, China, and Department of Mathematics, HKUST, Hong Kong}
\email{hlchang@whu.edu.cn}
\author[Lee]{Sanghyeon Lee}
\address{Department of Mathematics, Ajou University, Suwon, South Korea
}
\email{sanghyeon25@ajou.ac.kr}
\author[Li]{Jun Li}
\address{Shanghai Center for Mathematical Sciences(SCMS), Fudan University, Shanghai, China, and Shanghai Institute for Mathematics
and Interdisciplinary Sciences(SIMIS), Shanghai, China}
\email{lijun2210@fudan.edu.cn}
\thanks{}
\date{}
\begin{document}

\begin{abstract}
In this paper, we describe Mixed-Spin-P(MSP) fields for a smooth CY 3-fold $X_{3,3} \subset \mathbb{P}^2 \times \mathbb{P}^2$. Then we describe $\mathbb{C}^* -$fixed loci of the moduli space of these MSP fields. We prove that any virtual localization term coming from the fixed locus corresponding to an irregular graph does not contribute to the invariant if the graph is not a pure loop, and also prove this vanishing property for the moduli space of N-MSP fields.
\end{abstract}

\maketitle

\setcounter{tocdepth}{1}
\tableofcontents

\section{Introduction}

  In this paper, we introduce the theory of Mixed-Spin-P(MSP) fields for the smooth Calabi-Yau 3-fold $X_{3,3} \subset \PP^2 \times \PP^2$, which is the zero set of a bidegree $(3,3)$ polynomial. This is a generalization of MSP fields for the quintic 3-fold $Q\subset \PP^4$ introduced in \cite{CGLL}.
  In the quintic 3-fold case, the moduli space of MSP fields $\cW_{g,\bold{d},\gamma}$ contains the moduli space of $p$--fields $M_{g,k}^p(\PP^4,d)$ developed in \cite{CL12} and the moduli space of $\mu_5$--spin curves $M_{g,d,\gamma}(\mu_5)$ developed in \cite{FJR1, FJR2, CLL15} as its sublocus. 

\smallskip

On the other hand, there is a torus action on the moduli space $\cW_{g,\bold{d},\gamma}$ and its torus fixed locus $\cW_{g,\bold{d},\gamma}^T$ is a union of $M_{g,k}^p(\PP^4,d)$ and $M_{g,d,\gamma}(\mu_5)$, and other fixed loci. In fact, the moduli space $\cW_{g,\bold{d},\gamma}$ has a $T$--equivariant perfect obstruction theory and $T$--invariant cosection. Furthermore, its cosection degeneracy locus is proper \cite{CLLL2,CLLL}.

\smallskip

Furthermore, a notion, which is a modification of the notion of MSP fields, called \textbf{`N-MSP'} fields is developed in \cite{CGLL}. The moduli space of N-MSP fields is similar to the moduli space of MSP fields, and also has good properties. This new notion turns out to be useful to compute enumerative invariants, such as Gromov-Witten(GW) invariants and FJRW invariants.
  In \cite{CGL1,CGL2}, the authors applied torus localization to the localized virtual cycle of the moduli space $[\cW_{g,\bold{d},\gamma}]\virt\loc$ to prove polynomiality and BCOV's Feynman rule of the Gromov-Witten potential function of the quintic 3-folds. 
  When they package terms from torus fixed loci, it is important to show that the contributions from fixed loci corresponding to \textbf{`irregular'} graphs vanishes to make the computation simple. It is proven in \cite{CL20van, CGLL}. We also note that above works for quintic 3-folds have recently been generalized to the case of Calabi-Yau hypersurfaces in weighted $\PP^4$s in \cite{Lei1, Lei2, Lei3}.

\smallskip

In this paper, we will describe the torus fixed loci of the moduli space of MSP fields for $X_{3,3}$ in Section \ref{sect:MSP}. Then we will show that contributions from the fixed loci corresponding to irregular graphs with $0$--$\infty$ edge vanish in the torus localization computation in Section \ref{sect:insepvan}. 
Proving this vanishing will allow us to reduce the localization computation to the fixed loci corresponding to graphs where there are no ($0$--$\infty$)--edges, as in \cite{CGL1,CGL2, Lei2, Lei3}. For the moduli space of N--MSP fields, we also prove the same vanishing in Section \ref{sect:insepvanN}. 

\smallskip

We expect that this will help to prove polynomiality and BCOV's Feynman rule for the smooth CY 3-fold $X_{3,3} \subset \PP^2 \times \PP^2$. 
On the other hand, different from the quintic 3-fold, the Picard group of $X_{3,3}$ has two generators. I expect the arguments in this paper will help for the similar discussions for CY3 complete intersections in the product of projective spaces.
  We note that descriptions of MSP fields are dealt with in \cite{CGLLZ} in a general setting, for Calabi-Yau 3-folds embedded in a GIT quotient as a complete intersection. In that paper, the authors also proved that moduli spaces are equipped with a natural torus action, $T$--equivariant perfect obstruction theory and $T$--invariant cosection, and with a well-defined cosection localized virtual cycle. Here the $T$--invariant cosection is defined in a parallel way as in \cite{CLLL2}.

\medskip

\noindent \textbf{Acknowledgement.} 
The second named author thanks Yang Zhou for intensive discussions on $\CC^*$--action and their fixed loci on the moduli space of MSP fields. The first named author was partially supported by NSFC grants 24SC04, and Hong Kong grant GRF 16303122 and 16301524. 
The second named author thanks Fudan University and Shanghai Center for Mathematical Sciences for the excellent working environment provided during the writing of the paper. The third named author is partially supported by the National Key Research and Development Program of China $\#$2020YFA0713200, NSFC grant 12071079, and by Shanghai SF grant 22YS1400100. The third named author would like to acknowledge the support as a member from the Key Laboratory of Mathematics for Nonlinear Sciences.

\section{Description of MSP fields}\label{sect:MSP} 
In this section, we will describe theory of Mixed-Spin-P(MSP) fields for the Calabi-Yau 3-fold $X_{3,3}$. 

\subsection{The Landau-Ginzburg model} $\phantom{a}$ \\
\indent Consider the following Landau-Ginzburg(LG) model. Let 
$V = \CC^3 \times \CC^3 \times \CC \times \CC \times \CC$, and denote its coordinates by $(x_1,x_2,x_3,y_1,y_2,y_3,p,u,v)$ and let $\bx := (x_1,x_2,x_3)$ and $\by:=(y_1,y_2,y_3)$. Let $G := \CC^* \times \CC^* \times \CC^*$, $R:= \CC^*$, $\Gamma := G \times R$. 
We let $\Gamma$ act on $V$ by weights:
\[
\left[
\begin{matrix}
1 & 1 & 1 & 0 & 0 & 0 & -3 & 1 & 0 \\
0 & 0 & 0 & 1 & 1 & 1 & -3 & 0 & 0 \\
0 & 0 & 0 & 0 & 0 & 0 & 0 & 1 & 1 \\
\hline
0 & 0 & 0 & 0 & 0 & 0 & 1 & 0 & 0
\end{matrix}
\right].
\]

Note that the first three rows are the weight of $G = (\CC^* \times \CC^* \times \CC^*)$--action on $V$, and the last row consists of the weights of $R$--action on $V$. The potential function $V \to \CC$ is given by
\[
F(\bx,\by) \cdot p
\]
where $F$ is the bi-homogeneous function with degree $3$ in $\bx$ and with degree $3$ in $\by$. We assume that $F$ defines a smooth hypersurface $X_{3,3}$ in $\PP^2 \times \PP^2$.

\smallskip

Pick the character $\vartheta : \Gamma \to \CC^*$ given by the weight vector $(1,1,2,0)$ and let $\theta := \vartheta|_G$. Let us consider the GIT quotient $\wtil{X} := V \sslash_\theta G$. Then the unstable locus of this $G$-action on $V$ via the character $\theta$ is given by the common vanishing locus of the functions:
\[
x_i y_j v^2, \ \ y_j uv, \ \ pu^6 y_j^6, \ \ \ \  1 \leq i,j \leq 3.
\]
This locus is equal to:
\begin{align}\label{eq:unstable}
\left\{ (y_1,y_2,y_3)=0 \right\} \cup \left\{ (u,v) = 0 \right\} \cup \left\{ (x_1,x_2,x_3,u) = 0 \right\} \cup \left\{ (p,v)=0 \right\}.
\end{align}


Therefore, we have a projection
\[
\wtil{X}= [V \sslash_\theta G] \to [\CC^3- \left\{ 0\right\} / \CC^*]=\PP^2
\]
where $\CC^3$ is the factor of $V$, corresponding to the coordinates $(y_1,y_2,y_3)$.
Next we consider a $\CC^*$--action on $\wtil{X}$, given by
\[
t \cdot (\bx,\by,p,u,v) := (\bx,\by,p,tu,v) 
\]
Then, we can observe that the fixed locus of this action is given by
\[
[\set{ u=0 }\sslash_\theta G] \sqcup [\set{v=0} \sslash_\theta G] \sqcup [\set{u\neq 0, v\neq 0, \bx = 0, p=0} \sslash_\theta G].
\]
We have 
\begin{align*}
& X_0 := [\set{u=0} \sslash_\theta G] \cong K_{\PP^2 \times \PP^2} \\
& X_1 := [\set{u\neq 0, v\neq 0, \bx = 0, p=0} \sslash_\theta G] \cong \PP^2 \\
& X_\infty := [\set{v=0} \sslash_\theta G] \textrm{ is a fibration over $\PP^2$ whose fibers are isomorphic to $[\CC^3/\mu_3]$}.
\end{align*}

\noindent Moreover, the restriction of the potential function
\[
F(\bx, \by)\cdot p \, |_{X_0} : X_0 \to \CC
\]
gives us a Calabi-Yau Landau-Ginzburg(LG) model, whose critical locus is isomorphic to $X_{3,3}:=\set{F=0} \subset \PP^2 \times \PP^2$.
  On the other hand, the restriction of the potential function
\[
F(\bx, \by)\cdot p \, |_{X_\infty} : X_\infty \to \CC
\]
gives us a hybrid LG model.

\subsection{Moduli space of Mixed-Spin-P fields}

We construct the moduli space of Mixed-Spin-P(MSP) fields, whose target is $\wtil{X}$.
It is similar to the moduli space of stable maps to $\wtil{X}$, but slightly different from that. An MSP field to $\wtil{X}$ is a datum:
$(\cC, \Sigma^{\cC}, \cL_1, \cL_2, \cN, \phi, \theta, \rho, \mu, \nu )$ which satisfies the following:
\begin{itemize}
\item[•]
$\cC$ is a genus $g$ nodal twisted (orbifold) curve.
\item[•]
$\Sigma^{\cC}$ is a set of marked points on $\cC$, $\cC \setminus (\Sigma^{\cC} \sqcup \set{\textrm{nodes}})$ is a scheme.
\item[•]
For $p \in \Sigma^{\cC}$, the automorphism group $\Aut(p)$ is isomorphic to $\set{id}$ or $\mu_3(\cong \ZZ_3)$.
\item[•]
$\cL_1,\cL_2,\cN$ are line bundles on $\cC$. 
\item[•]
For $p \in \Sigma^{\cC}$ $\Aut(p)$ acts faithfully on $\cL_1|_p$. For $\zeta_3 = \exp(2\pi i/3)$ and $v \in \cL_1|_p$, $\zeta_3 \cdot v = \zeta_3^{m(p)} v$, $m(p) \in \set{0,1,2}$.
\item[•]
We give a decomposition $\Sigma^{\cC} = \sqcup_{i=0}^2 \Sigma^{\cC}_i$ such that every $p \in \Sigma^{\cC}_i$ satisfies $m(p)=i$. 
\item[•]
$\phi=(\phi_1,\phi_2,\phi_3) \in \rH^0( \cC, \cL_1 ^{\oplus 3} ), \ \theta=(\theta_1,\theta_2,\theta_3) \in \rH^0(\cC, \cL_2^{\oplus 3}), \ \rho \in \rH^0(\cC, \cL_1^{-3}\otimes \cL_2^{-3} \otimes \omega^{\log}_{\cC} ), \ \mu \in \rH^0(\cC, \cL_1 \otimes \cN), \nu \in \rH^0( \cC, \cN)$.
\item[•]
There is a decomposition $\Sigma^{\cC}_0 = \Sigma^{\cC}_{(1,\phi)} \sqcup \Sigma^{\cC}_{(1,\rho)}$ such that 
$\rho|_{ \Sigma^{\cC}_{(1,\rho)}}=0, \ \phi|_{\Sigma^{\cC}_{(1,\phi)}}=0$. Therefore $ \ \phi_1,\phi_2,\phi_3 \in \rH^0( \cC, \cL_1(-\Sigma^{\cC}_{(1,\phi)}) ), \ \rho \in \rH^0(\cC, \cL_1^{-3}\otimes \cL_2^{-3} \otimes \omega^{\log}_{\cC}(-\Sigma^{\cC}_{(1,\rho)}) ).$
\end{itemize}

Let $\Sigma^{\cC} = \set{x_1,\dots,x_n}$ be marked points and let $m_i := m(x_i)$. Note that $\mu_3 = \set{1, \zeta_3, \zeta_3^2} \subset \CC^*$. Let $\mu_{na} := \set{(1,\phi),(1,\rho)}\cup (\mu_3 - \set{1})$, $\mu_{br} := \set{(1,\phi),(1,\rho)}\cup \mu_3$. Note that `na' stands for \textbf{`narrow sector'} and `br' stands for \textbf{`broad sector'} in FJRW theory. 
  In this paper, we assume \textbf{`narrow condition'} : Let $\gamma_i := \zeta_3^{m_i}$. If $\gamma_i = 1$($m_i = 0$) then $x_i \in \Sigma^{\cC}_{(1,\phi)} \cup \Sigma^{\cC}_{(1,\rho)}$, so that $\phi(x_i) = 0$ or $\rho(x_i)=0$. If $x_i \in \Sigma^{\cC}_{(1,\phi)}$ then we assign $\gamma_i = (1, \phi)$ instead of $\gamma_i = 1$, and if $x_i \in \Sigma^{\cC}_{(1,\rho)}$ then we assign $\gamma_i = (1, \rho)$ instead of $\gamma_i = 1$. Hence we obtain a vector $\gamma(\xi) = (\gamma_1,\dots, \gamma_n)$, where $\gamma_i \in \mu_{na}$.
  Also we define the stability condition for MSP field $\xi = ( \cC, \Sigma^{\cC}, \cL_1, \cL_2, \cN, \phi, \theta, \rho, \mu, \nu ) $ as follows. This coincides with the stability condition in \cite{CGLLZ} for $X_{3,3}$ case. 

  Let $d_0(\xi):= \deg \cL_1 \otimes \cN$, $d_\infty(\xi) := \deg \cN$, and $d(\xi) := \deg \cL_1 = d_0(\xi) - d_\infty(\xi)$.

\begin{defi}\label{cond:stab}
The MSP field $\xi = ( \cC, \Sigma^{\cC}, \cL_1, \cL_2, \cN, \phi, \theta, \rho, \mu, \nu ) $ is called stable if it satisfies the following conditions
\begin{enumerate}
\item The pairs
\[
(\mu,\nu), \ (\phi,\mu), \ \theta, \ (\rho,\nu)
\]
are nowhere vanishing.
\item 
  The $\QQ$-line bundle  
  \begin{align}\label{cond}
    \cL_1 \otimes \cL_2 \otimes \cN^{\otimes 2} \otimes (\omega_{\cC}^{\log})^{\otimes \left(\frac{1}{3} \right)^+ }
  \end{align}
  is positive, i.e. the degrees are positive over all irreducible subcurves over $\cC$.
\end{enumerate}
\end{defi}

\begin{rema}
Because of the stability condition (1) in Definition \ref{cond:stab}, the triple $\theta=(\theta_1,\theta_2,\theta_3)$ does not vanish everywhere, hence defines a morphism $\bar{\theta} : \cC \to \PP^2$.
\end{rema}



\subsection{$\CC^*$--action on MSP fields, equivariant perfect obstruction theory, and $\CC^*$--fixed loci} \label{sect:equivariant1}
$\phantom{a}$ \\
\indent Next we define a $\CC^*$--action on MSP fields. We define the action by
\[
t \in \CC^*, \ t \cdot (\cC, \Sigma^{\cC}, \cL_1, \cL_2, \cN, \phi, \theta, \rho, \mu, \nu ) := (\cC, \Sigma^{\cC}, \cL_1, \cL_2, \cN, \phi, \theta, \rho, t \mu, \nu ).
\]
  
  We define $\cW_{g,\gamma,\bold{d}}$ as the moduli space of the above MSP fields $(\cC, \Sigma^{\cC}, \cL_1, \cL_2, \cN, \phi, \theta, \rho, \mu, \nu )$. 
  Parallel to the case of the moduli space of MSP fields for the quintic 3-fold established in \cite{CGLLZ}, $\cW_{g,\gamma,\bold{d}}$ is a DM stack.
  We will usually abbreviate $\cW_{g,\gamma,\bold{d}}$ as $\cW$.
  Let $\cD = \cD_{g,\gamma,\bold{d}}$ be the stack of data $(\cC, \Sigma^{\cC}, \cL_1, \cL_2, \cN)$. Parallel to \cite{CL12,CLLL2,CLLL}, we can check that $\cD_{g,\gamma,\bold{d}}$ becomes a smooth Artin stack and also can check that $\cW_{g,\gamma,\bold{d}}$ has a following $\CC^*$--equivariant relative obstruction theory over $\cD_{g,\gamma,\bold{d}}$:
\begin{gather*}
\phi\dual_{\cW/\cD} : \TT_{\cW/\cD} \to \EE_{\cW/\cD} := R \pi_* \cV \\
\textrm{where } \cV := (\cL_1^{\log})^{\oplus 3} \oplus \cL_2^{\oplus 3} \oplus \cP^{\log} \oplus (\cL_1 \otimes \cN \otimes \bL_1) \oplus \cN \\
\textrm{where } \cL_1^\log := \cL_1(-\Sigma^{\cC_\cW}_{(1,\phi)}), \ \cP^{\log} := (\cL_1\dual)^{\otimes 3}\otimes \omega^{\log}_{\cC_\cW / \cW}(-\Sigma^{\cC_\cW}_{(1,\rho)}), \\
\cC_{\cW} \to \cW \textrm{ is the universal curve}, \textrm{ $\bL_k$ is the representation of $\CC^*$ on $\CC$ with a weight $k$}.
\end{gather*}

Moreover, parallel to \cite{CL12} and \cite[Section 2.3, equation (2.9)]{CLLL2}, we can construct a ($\CC^*$--equivariant) cosection $\Ob_\cW := \rH^1(\EE_{\cW/\cD}) \to \cO_\cW$ and check that the degeneracy locus of the cosection, $\cW^- \subset \cW$, is proper.


\medskip

Let $\cW^T \subset \cW$ be the fixed locus of the above $\CC^*$--action and we will describe its property in this section. Consider an MSP field
\[
  \xi = (\cC, \Sigma^{\cC}, \cL_1, \cL_2, \cN, \phi, \theta, \rho, \mu, \nu).
\]
If we take a suitable finite cover of the torus $\CC^* \to \CC^*$, $t \mapsto t^\ell$, then if $\xi$ is $\CC^*$--fixed (which means $t^\ell \cdot \xi \cong \xi$ for all $t \in \CC^*$), one can always find a morphism $h : \CC^* \to \Aut(\cC, \Sigma^{\cC})$ and linearizations $\tau_{t,\cL_1} : h(t)_* \cL_1 \to \cL_1$, $\tau_{t,\cL_2} : h(t)_* \cL_2 \to \cL_2$, and $\tau_{t,\cN} : h(t)_* \cN \to \cN$ such that
\begin{align*}
t \cdot (\phi, \theta, \rho, \mu, \nu) & = (\phi, \theta, \rho, t\mu, \nu) \\
& =(\tau_{z,\cL_1},\tau_{z,\cL_2},\tau_{z,\cN})(h(z)_*\phi, h(z)_* \theta, h(z)_*, h(z)_* \rho, h(z)_* \mu, h(z)_* \nu).
\end{align*}

Note that by an argument parallel to \cite[Section 2.2]{CLLL}, for a stable $\CC^*$--fixed MSP field $\xi$, we can show that the induced $\CC^*$--action on the domain curve $\cC$ and the linearizations $\tau_{t, \cL_1}, \tau_{t, \cL_2}, \tau_{t, \cN}$ are unique. For convenience, we allow our $T = \CC^*$ to act on curves with rational weights, instead of considering the finite cover of $T$.

\subsubsection{Decomposition of the domain curve} \label{sect:decompdomain}$\phantom{a}$ \\
\indent For $\xi \in \cW^T$, we will decompose $\cC = \cC_\xi$ by components which are stable under the $\CC^*$--action. Similar to \cite{CLLL}, we have the decomposition
\[
\cC = \cC_0 \cup \cC_1 \cup \cC_\infty \cup \cC_{01} \cup \cC_{1\infty } \cup \cC_{0\infty}.
\]
Each component $\cC_i$, $\cC_{ij}$ is defined by the following:
\begin{align*}
& \cC_0 := \set{ \mu = 0, \nu \neq 0 }, \ \cC_1 := \set{\mu \neq 0, \nu \neq 0, \phi = \rho = 0}, \ \cC_\infty := \set{\mu \neq 0, \nu = 0},  \\
& \cC_{01} = \set{\rho=0, \nu \neq 1}, \ \cC_{1\infty} = \set{\mu \neq 0, \phi = 0}. 
\end{align*}


\subsubsection{Localization graph corresponding to a $\CC^*$--fixed MSP field} \label{sect:assocgraph} $\phantom{a}$ \\
\indent Using the decomposition in Section \ref{sect:decompdomain}, for $\xi \in \cW^T$, we assign a localization graph $\Gamma_\xi$ with the following data of vertices, edges, legs:
\begin{itemize}
\item[(1)]
Set of vertices $V(\Gamma_\xi) := V_0 \cup V_1 \cup V_\infty$, where $V_i$ is the set of connected components of $\cC_i$, $i=0,1,\infty$. 
Therefore, for each vertex $v$, there is the corresponding connected curve $\cC_v \subset \cC$.
\item[(2)]
Set of edges $E(\Gamma_\xi) := E_{01} \cup E_{1\infty} \cup E_{0 \infty}$, where $E_{ij}$ are irreducible components($\cong \PP^1$ or $\PP(1,3)$) of $\cC_{ij}$, $i,j \in \set{0,1,\infty}, \ i\neq j$. For each edge $e$, there is a corresponding irreducible curve $\cC_e \subset \cC$ and $\cC_e \cong \PP^1$ or $\PP(1,3)$. Moreover, $e \in E_{ij}$ connects a vertex $v_i \in V_i$ and a vertex $v_j \in V_j$ where $\cC_e$ intersects with $\cC_{v_i}, \cC_{v_j}$.
\item[(3)]
(Ordered) set of legs $S(\Gamma_\xi)$ is identified with the set of markings $\Sigma^{\cC}$ of $\cC_i$. For $s \in S(\Gamma_\xi) \cong \Sigma^{\cC}$, there is a unique $v \in V(\Gamma_\xi)$ such that $s \in \cC_v$. Then the leg $s$ is connected to a vertex $v$. For each $s \in S$ we assign the marking $x(s) \in \Sigma^\cC$.
Decomposition of $\Sigma^{\cC}$ automatically gives the decomposition $S = S^0 \sqcup S^1 \sqcup S^2$, $S^0 = S^{(1,\phi)} \sqcup S^{(1,\rho)}$.
\end{itemize}
Also the graph is decorated by the following data:
\begin{itemize}
\item[(i)](genus) For each vertex $v \in V$, we assign $g(v) := g(\cC_v) \in \ZZ_{\geq 0}$.
\item[(ii)](degree) For each vertex $v \in V(\Gamma_\xi)$(resp. edge $e \in E(\Gamma_\xi)$) we assign a degree pair $(d_{v 0},d_{v \infty})$ (resp. $(d_{e 0},d_{e \infty} )$) where $d_{v 0}:=\deg \cL_1 \otimes \cN |_{\cC_v}$, $d_{v \infty}:=\deg \cN|_{\cC_v}$(resp. $d_{e 0}:=\deg \cL_1 \otimes \cN |_{\cC_e}$, $d_{e \infty} :=\deg \cN|_{\cC_e}$). We also define $d_v := d_{v0} - d_{v \infty}$ and $d_e := d_{e 0} - d_{e \infty}$.
\item[(iii)](monodromy)
For each leg $s \in S$, we assign $m(s) \in \set{0,1,2}$ such that $\exp(2 \pi i/3)=:\zeta_3 \in \mu_3\cong \Aut(x(s))$ act on $\cL_1|_{x(s)}$, as a scalar multiplication of $\zeta_3^{m(s)}$. 
\end{itemize}
The data above define the localization graph $\Gamma_\xi$ corresponding to $\xi \in \cW^T$. For later use, we will introduce more definitions here.
\begin{itemize}
\item[•]
For a vertex $v \in V$, $E_v \subset E$ is a set of edges connected to $v$. Let $S_v \subset S$ be a set of legs connected to $v$. Then the valency of the vertex $\val(v) = |E_v| + |S_v|$.
\item[•] We call a vertex $v \in V(\Gamma_\xi)$ stable if $\cC_v$ is 1-dimensional. And otherwise we call $v$ unstable. Let $V^S$ be the set of stable vertices and $V^U$ be the set of unstable vertices, and we have $V(\Gamma_\xi) = V^S \cup V^U$.
\item[•]
For $m=0,1,2$, we let $S^m$ be the set of vertices with monodromy $m$. We have the decomposition $S^0 = S^{(1,\phi)} \sqcup S^{(1,\rho)}$ such that $S^{(1,\phi)} \cong \Sigma^\cC_{(1,\phi)}$ and $S^{(1,\rho)} \cong \Sigma^\cC_{(1,\rho)}$. We also define $S_v^m$, $S_v^{(1,\phi)}$, $S_v^{(1,\rho)}$ as well.
\end{itemize}

\noindent Moreover, for a decorated graph $\Gamma$, we assign the following data as follows:
\begin{itemize}
\item[(i)] (genus) We define the genus of the graph $\Gamma$ by $g(\Gamma) := \sum_{v \in V(\Gamma)} g(v) + g(|\Gamma|)$, where $|\Gamma|$ is a graph obtained from $\Gamma$ by forgetting all decorations.
\item[(ii)] (degree) We define the degree of the graph $\Gamma$ by $\bold{d}(\Gamma) = \sum_{v \in V(\Gamma)} \bold{d}(v) + \sum_{e \in E(\Gamma)} \bold{d}(e)$.
\item[(iii)] (monodromy) We define the monodromy vector of the graph $\Gamma$ as follows. Let the set of legs $S(\Gamma) := (s_1,\dots,s_\ell)$, we define $\gamma_i$, $1 \leq i \leq \ell$ by
\[
\left\{
\begin{matrix}
\gamma_i = \zeta_3^{m_i} & \textrm{if $m_i = 1,2$} & \\
\gamma_i=(1,\phi) & \textrm{if $s_i \in S^{(1,\phi)}$} & \\
\gamma_i=(1,\rho) & \textrm{if $s_i \in S^{(1,\rho)}$} & .
\end{matrix}
\right.
\]
Then we define the monodromy vector $\gamma(\Gamma):=(\gamma_1,\dots,\gamma_\ell)$.
\end{itemize}

\subsubsection{Linearization of $\CC^*$--action on $\PP^1$ to a line bundle on $\PP^1$}\label{sect:linearization} $\phantom{a}$ \\
\indent Here we describe linearizations $\tau_{t}$ in details. 
Consider a $\CC^*$--action on $\PP^1$ which fixes two points $p_0 = [0:1]$ and $p_1 = [1:0]$. 
Then, for $t \in \CC^*$, $h(t) \in \Aut(\PP^1)$ should be of the form:
      \[
	  h(t)([x:y]) = [t^k x:y], \ k \in \QQ.  
      \]
For a line bundle $\cO(d)$ over $\PP^1$, consider the linearization $L_t : h(t)_* \cO(d) \to \cO(d) $ given by the following. Let $U_0 = \set{y \neq 0} \subset \PP^1$ be the neighborhood of $p_0$ and let $U_1 = \set{x \neq 0}$ be the neighborhood of $p_1$. Let us represent $\cO_{\PP^1}(d)$ by local trivialization over $U_0$, $U_1$ and a transition function over $U_0\cap U_1$:
\begin{align*}
\psi_{01} : (U_0 \times \CC)|_{U_0\cap U_1} & \to (U_1 \times \CC)|_{U_0 \cap U_1} \\
([x:y],u) & \mapsto ([x:y],(y/x)^d u).
\end{align*}

Then, $h(t)_*\cO(d)$ is given by the following data of local trivialization and a transition function:
\begin{align*}
\psi_{01}' : (U_0 \times \CC)|_{U_0\cap U_1} & \to (U_1 \times \CC)|_{U_0 \cap U_1} \\
([x:y],u) & \mapsto ([x:y],(y/t^{-k} x)^d u).
\end{align*}
Then, a linearization $L_t : h(t)_* \cO(d) \to \cO(d)$ is represented as follows, by using local trivialization, which commutes with transition functions $\psi_{01}, \psi_{01}'$ as follows
\begin{align*}
\xymatrix{
\textrm{Over $U_0$ :} & ([x:y],u) \ar@{|->}[r]^-{\times t^\lambda} \ar@{|->}[d]_-{\psi_{01}'} & ([x:y],t^\lambda u) \ar@{|->}[d]^-{\psi_{01}} \\
\textrm{Over $U_1$ :} & ([x:y],(y/t^{-k} x)^d u) \ar@{|->}[r]_-{\times t^{\lambda - kd}} & t^\lambda(y/x)^d u 
}
\end{align*}
where the points on the left column represent the same point of the line bundle $h(t)_* \cO(d)$ and the points on the right column represent the same point of the line bundle $\cO(d)$, and horizontal arrows represent the morphism $L_t : t^*\cO(d) \to \cO(d)$. Here the weight of $\CC^*$--action on $\cO(d)|_{p_0}$ is $\lambda$, and the weight of $\CC^*$--action on $\cO(d)|_{p_1}$ is $\lambda - kd$.
Therefore, we obtain the following lemma.

\begin{lemm}\label{lemm:linwt}
Let $w_i$ be the weight of the $\CC^*$--representation on $\cO(d)|_{p_i}$ for $i=0,1$. Then we have
\[
(w_0-w_1)/d = k.
\]  
\end{lemm}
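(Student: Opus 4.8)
The plan is to read off the weights directly from the commuting square that was just drawn, which already encodes all the needed information. First I would fix the linearization parameter $\lambda \in \QQ$ appearing in the diagram, and recall that the weight of the $\CC^*$-representation on a fiber $\cO(d)|_p$ is by definition the exponent of $t$ in the scalar by which $L_t$ acts on that fiber, computed in a local trivialization around $p$. So I would simply declare: in the trivialization over $U_0$ (the chart containing $p_0$), the horizontal arrow in the diagram is multiplication by $t^\lambda$, hence $w_0 = \lambda$; in the trivialization over $U_1$ (the chart containing $p_1$), the horizontal arrow is multiplication by $t^{\lambda - kd}$, hence $w_1 = \lambda - kd$.

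The second step is purely to check that these two assignments are consistent, i.e. that the diagram really does commute, so that $L_t$ is a well-defined morphism of line bundles and the two local computations are describing the same global object. This amounts to verifying that starting from $([x:y],u)$ over $U_0$, the two ways around the square agree: going right then down gives $([x:y], (y/x)^d t^\lambda u)$, while going down then right gives $([x:y], t^{\lambda-kd}(y/t^{-k}x)^d u) = ([x:y], t^{\lambda - kd} t^{kd} (y/x)^d u) = ([x:y], t^\lambda (y/x)^d u)$. These coincide, so the square commutes. With this in hand, subtracting the two weights gives $w_0 - w_1 = \lambda - (\lambda - kd) = kd$, and dividing by $d$ (which is nonzero, or else the statement is read as an identity of the form $0 = 0$ and there is nothing to prove — in practice $d \neq 0$ on the edges where this is applied) yields $(w_0 - w_1)/d = k$, as claimed.

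I do not anticipate a genuine obstacle here: the statement is essentially a bookkeeping consequence of the explicit transition-function computation in Section \ref{sect:linearization}, and the only mild subtlety is making sure the convention for "weight on a fiber" matches the convention used elsewhere in the paper (namely reading the exponent of $t$ in the local action, with the chart chosen so that the fixed point sits at the origin or at infinity of that chart). The one point I would be slightly careful about is the sign of $k$ and the direction of the pushforward $h(t)_*$ versus pullback $h(t)^*$, since a flipped convention there would change $w_0 - w_1$ to $w_1 - w_0$; but the diagram in the excerpt pins down the convention unambiguously, so I would just cite it. Thus the proof is two lines: identify $w_0 = \lambda$ and $w_1 = \lambda - kd$ from the diagram, then compute $(w_0 - w_1)/d = k$.
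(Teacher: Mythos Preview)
Your proposal is correct and follows essentially the same approach as the paper: the paper reads off $w_0=\lambda$ and $w_1=\lambda-kd$ directly from the commuting square drawn immediately before the lemma, and states the lemma as an immediate consequence without a separate proof. Your additional verification that the square commutes and your remark on the $d=0$ edge case are fine but not needed.
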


\begin{rema}\label{rema:invsec}
For a section $s = x^m y^{d-m} \in \rH^0(\PP^1, \cO(d))$, we can check $L_t(h(t)_* s) = t^{-km+\lambda} s$. Therefore, $s$ is invariant if and only if $\lambda-km = 0$.
\end{rema}

\smallskip

\subsubsection{Linearization of $\CC^*$--action on $\PP(1,3)$ to a line bundle on $\PP(1,3)$}\label{sect:linearization2} $\phantom{a}$ \\
\indent Here we analyse linearizations $\tau_{t,-}$ in details. This will be useful when we describe weights of line bundles at special points(nodes or marked points) of $\cC_{ij}$. Let $[x:y]$ be the coordinate of $\PP(1,3)$. Let $x$ have degree $3$ and $y$ have degree $1$. Let us denote $p_0 = [0:1]$ and $p_1 = [1:0]$. Note that $p_1 \cong [pt/\mu_3]$ is the orbifold point.

\smallskip

Let us consider the morphism $u : \PP^1=\set{[z:y]} \to \PP(1,3)$ given by $u([z:y]) = [z^3:y]$. We denote $[0:1]\subset \PP^1$ by $p_0'$ and denote $[1:0] \in \PP^1$ by $p_1'$. Note that $p_0',p_1'$ are the ramification points, with ramification degree $3$. Consider a $\CC^*$--action on $\PP(1,3)$ which fixes two points $p_0,p_1$. This will correspond to the (coarse space) of $\cC_{ij}$ later. For $t \in \CC^*$, the $h(t) \in \Aut(\PP(1,3))$ should be of the form:
      \[
	  h(t)([x:y]) = [x:t^{-k}y] = [t^{3k}x:y], \ k \in \QQ.  
      \]

For the line bundle $\cO(d)$ over $\PP(1,3)$ and a linearization $L_t : h(t)_* \cO(d) \to \cO(d)$ we consider their pull-backs over $\PP^1$ via the $u$. Then we have $u^* \cO_{\PP(1,3)}(d) \cong \cO_{\PP^1}(3d)$, and we have the induced automorphism $h(t)' \in \Aut(\PP^1)$ of the form: 
\[
h'(t)([z:y]) = [t^k z : y], k \in \QQ.
\]      
Also we have the induced linearization $L_t' : h'(t)_* \cO(3d) \to \cO(3d)$. Then, by computations in Section \ref{sect:linearization}, we have
\[
(w_0' - w_1')/3d = k
\]
where $w_i'$ are the weights of $\CC^*$--representations on $\cO(3d)|_{p_i'}$ for $i=0,1$.

Let $w_i$ be the weight of the $\CC^*$--representation on $\cO(d)|_{p_i}$ for $i=0,1$. Then we have $w_i = w_i'$ for $i=0,1$. Note that $\deg(\cO(d)) = 3d$ here. So that we have the following.

\begin{lemm}\label{lemm:linwt2}
We have \\ 
$(w_0 - w_1)/\deg(\cO(d)) = (w_0 - w_1)/3d = k$ and $T_{\PP(1,3)}|_{p_0} \cong (T_{\PP^1}|_{p_0'})^{\otimes 3} \cong \bL_{3k}, \ T_{\PP(1,3)}|_{p_1} \cong T_{\PP^1}|_{p_1'} \cong \bL_{-k}$ where $\bL_k$ is the representation of $\CC^*$ on $\CC$ with a weight $k \in \ZZ$.
\end{lemm}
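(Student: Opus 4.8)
The plan is to deduce the lemma from the $\PP^1$ case treated in Section \ref{sect:linearization} by pulling everything back along the degree-$3$ map $u : \PP^1 \to \PP(1,3)$, $u([z:y]) = [z^3:y]$. First I would record that $u$ is $\CC^*$-equivariant for the action $h'(t)([z:y]) = [t^k z:y]$ on the source and $h(t)$ on the target: this is immediate from the formulas, since $u(h'(t)([z:y])) = [(t^k z)^3:y] = [t^{3k}z^3:y] = h(t)([z^3:y]) = h(t)(u([z:y]))$, using $h(t)([x:y]) = [t^{3k}x:y]$. Because $u(p_0') = p_0$ and $u(p_1') = p_1$, the linearization $L_t : h(t)_*\cO_{\PP(1,3)}(d)\to \cO_{\PP(1,3)}(d)$ pulls back to a linearization $L_t' : h'(t)_*\cO_{\PP^1}(3d) \to \cO_{\PP^1}(3d)$ of $u^*\cO_{\PP(1,3)}(d)\cong \cO_{\PP^1}(3d)$ over $h'(t)$, and this $L_t'$ is exactly the datum analysed in Section \ref{sect:linearization} for the automorphism $h'(t)$.

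Next I would compare the fibers at the two fixed points. Formation of the fiber of a line bundle at a $\CC^*$-fixed $\CC$-point commutes with $\CC^*$-equivariant pullback, so the one-dimensional $\CC^*$-representation $(u^*\cO_{\PP(1,3)}(d))|_{p_i'}$ is canonically identified, compatibly with $L_t'$ and $L_t$, with $\cO_{\PP(1,3)}(d)|_{p_i}$; for $p_1'$ the map $p_1'\to p_1$ is the $\mu_3$-chart $\mathrm{pt}\to B\mu_3$, so one simply forgets the $\mu_3$-action. Hence $w_i = w_i'$ for $i = 0,1$. Applying Lemma \ref{lemm:linwt} to $\cO_{\PP^1}(3d)$ with the automorphism $h'(t)([z:y]) = [t^k z:y]$ gives $(w_0'-w_1')/3d = k$, and combining this with $w_i = w_i'$ and the normalization $\deg\cO_{\PP(1,3)}(d) = 3d$ recorded just above the statement yields $(w_0-w_1)/\deg\cO_{\PP(1,3)}(d) = (w_0-w_1)/3d = k$.

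For the tangent space statement I would examine $u$ in local charts at $p_0'$ and $p_1'$, and here lies the one delicate point. Near $p_0'$, in the charts $\{y\neq 0\}$ on both sides, $u$ is the honest triple cover $z \mapsto z^3 = x$, totally ramified of order $3$, so $u^*\mathfrak{m}_{p_0} = \mathfrak{m}_{p_0'}^{3}$, which induces a $\CC^*$-equivariant isomorphism of one-dimensional representations $T_{\PP(1,3)}|_{p_0}\cong (T_{\PP^1}|_{p_0'})^{\otimes 3}$. Near $p_1'$, in the charts $\{x\neq 0\}$, the target chart is the orbifold chart $[\CC/\mu_3]$ and $u$ is the quotient map $\CC\to [\CC/\mu_3]$, which is \'etale, so $du$ is an isomorphism and $T_{\PP(1,3)}|_{p_1}\cong T_{\PP^1}|_{p_1'}$ as $\CC^*$-representations. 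Reading off the weight of $h'(t)$ on the tangent coordinates at $p_0'$ and $p_1'$ exactly as in Section \ref{sect:linearization} gives $T_{\PP^1}|_{p_0'}\cong \bL_{k}$ and $T_{\PP^1}|_{p_1'}\cong \bL_{-k}$, hence $T_{\PP(1,3)}|_{p_0}\cong \bL_{3k}$ and $T_{\PP(1,3)}|_{p_1}\cong\bL_{-k}$, as claimed. The main obstacle is getting this last step exactly right: recognizing that the genuine ramification at $p_0'$ produces the cube while the stacky-but-\'etale behaviour of $u$ at the orbifold point $p_1'$ produces no twist, and keeping all the one-dimensional-representation isomorphisms $\CC^*$-equivariant throughout.
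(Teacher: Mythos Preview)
Your proof is correct and follows essentially the same approach as the paper: the lemma in the paper is stated as a summary of the discussion in Section~\ref{sect:linearization2}, which reduces to Lemma~\ref{lemm:linwt} via the pullback along $u:\PP^1\to\PP(1,3)$, exactly as you do. Your write-up adds the explicit justification for the tangent-space identifications, correctly distinguishing the genuinely ramified behaviour of $u$ at $p_0'$ (yielding the cube) from its \'etale behaviour at the orbifold point $p_1'$ (yielding no twist); the paper records these isomorphisms in the lemma statement but does not spell out this step.
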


\smallskip

\subsubsection{Description of MSP fields over subcurves}\label{sect:MSPdescript}
$\phantom{a}$
\\
\indent From now on, for an arbitrary section $s$ of a line bundle, we denote $s \equiv 0$ if $s$ is the zero section, and $s \equiv 1$ when $s$ is a nonvanishing section. For an MSP field $\xi \in \cW^T$, we investigate the restriction of MSP fields to $\cC_v$, $\cC_e$ as follows. 

\begin{rema}
An MSP field $\xi$ is stable if and only if the restrictions $\xi|_{\cC_i}$, $\xi|_{\cC_{ij}}$ are all stable.
\end{rema}

\begin{conv}
By abuse of notation, we will denote $\xi|_{\cC_v}$ by $(\cC_v,\Sigma^{\cC_v},\cL_1,\cL_2,\phi,\theta,\rho)$. Similar convention for $\xi|_{\cC_e}$.
\end{conv}

\begin{itemize}
\item[(a)] 
\textbf{($0$--vertex: $K_{\mathbb P^2\times \mathbb P^2}$--theory)}
    In this case, $\xi|_{\cC_v}$ satisfies $\mu\equiv 0$ and
    $\nu\equiv 1$. 

Since $\nu \equiv 1$, we have $\cN \cong \cO_{\cC_v}$ and therefore the stability condition (2) becomes

\begin{align}\label{eq:stab0}
\deg(\cL_1 \otimes \cL_2 \otimes (\omega_{\cC}^\log|_{\cC_v})^{\otimes \left( \frac{1}{3} \right)^+} ) > 0.
\end{align}
Because of the stability condition (1) in Definition \ref{cond:stab}, the triple $(\phi_1,\phi_2,\phi_3)$ is nonvanishing. Therefore, \eqref{eq:stab0} is equivalent to that the induced morphism $(\phi,\theta): \cC_v \to \PP^2 \times \PP^2$ is a stable map. 

\medskip

\noindent \textbf{(The $\mathbb C^*$--weights of line bundles)} $\phantom{a}$ \\
\indent Since $\CC^*$ acts trivially on $\cC_v$, 
and $\nu \equiv 1$ is a $\CC^*$--invariant section, $\cN \cong \cO_{\cC_v}$. Moreover, $\mu \equiv 0$, $\phi = (\phi_1,\phi_2,\phi_3)$ is nonvanishing over $\cC_v$ and $\phi_i$ are $\CC^*$--invariant. Since the induced $\CC^*$--action on $\cC_v$ is trivial, $\cL_1$ has weight $0$. Otherwise it cannot have nonzero invariant sections. Similarly we can show that $\cL_2$ has weight $0$ because $(\theta_1,\theta_2,\theta_3)$ is nowhere vanishing and $\theta_i$ are invariant. Note that when $v$ is a $1$--vertex or $\infty$--vertex, $\cL_2|_{\cC_v}$ has weight $0$ by the same reason.
In summary, $(\cL_1,\cL_2,\cN)$ all have $\CC^*$--weight 0.


\medskip

\item[(b)]
      \textbf{($1$--vertex : $\mathbb P^2$--theory)}

In this case $\xi|_{\cC_v}$ satisfies $\mu \equiv \nu \equiv 1$, $\phi \equiv \rho \equiv 0$. 
Since $\mu\equiv \nu \equiv 1$, we have $\cL_1\cong \cN \cong \cO_{\cC_v}$. Therefore, the second stability condition in Definition \ref{cond:stab} becomes

\begin{align}\label{eq:stab1}
\deg(\cL_2 \otimes (\omega_{\cC}^\log|_{\cC_v})^{\otimes \left( \frac{1}{3} \right)^+} ) > 0.
\end{align}
This is equivalent to that $\theta = (\theta_1,\theta_2,\theta_3)$ induces a stable map $\bar{\theta} : \cC_v \to \PP^2$.

\medskip
      
\noindent \textbf{(The $\mathbb C^*$--weights of line bundles)} $\phantom{a}$ \\
\indent Since $\mu \equiv \nu \equiv 1$, we have $\cN \cong \cO_{\cC_v}$ and $\cL_1 \otimes \bL_1 \cong \cO_{\cC_v}$. Therefore $\cL_1 \cong \cO_{\cC_v} \otimes \bL_{-1}$. $\cL_2$ has weight $0$. Thus $\cL_1$ has $\CC^*$--weight $-1$ and $\cN, \cL_2$ have weight $0$. 


\medskip

\item[(c)]
      \textbf{($\infty$--vertex : $\mathbb P^2$--family of $[\CC^3/\mu_3]$--theory)}

\indent In this case, $\xi|_{\cC_v}$ satisfies $\mu\equiv 1, \nu \equiv 0$.
Since $\nu \equiv 0$, we have $\rho \equiv 1$ from the stability condition (1). Thus we have $\cL_1^{-3} \otimes \cL_2^{-3} \otimes \omega^{\log}|_{\cC_{v}} \cong \cO_{\cC_v}$. Therefore we have $\cL_1^{-1} \cong \cL_2 \otimes (\omega^\log_{\cC}|_{\cC_v})^{-\frac{1}{3}}$. Moreover, since we have $\mu \equiv 1$, we have $\cL_1 \otimes \cN \cong \cO_{\cC_v}$. Therefore the stability condition (2) becomes

\begin{align}\label{eq:stabinf}
\deg ( \cL_2^{\otimes 2} \otimes (\omega^\log_\cC|_{\cC_v})^{0+} ) > 0.
\end{align}
This is equivalent to that $\theta = (\theta_1,\theta_2, \theta_3)$ induces a stable map to $\PP^2$.

      \medskip

\noindent \textbf{(The $\mathbb C^*$--weights of line bundles)} $\phantom{a}$ \\
\indent Since $\mu \equiv 1$, we have $\cL_1 \otimes \cN \otimes \bL_1 \cong \cO_{\cC_v}$. And $\cL_2$ has weight $0$.

\end{itemize}

\medskip

For each edge $e$, we describe restriction of MSP fields over $\cC_e$. In these cases, the coarse moduli spaces of $\cC_{e}$ will turn out to be $\PP^1$. 

\begin{itemize}
\item[(d)]
      \textbf{($0$--$1$)--edge}

      Here $e \in E_{01}(\Gamma_\xi)$ and $\xi|_{\cC_e}$ satisfies $\rho \equiv 0, \nu \equiv 1$. Here $\phi = (\phi_{1}, \phi_{2}, \phi_{3})$, $\phi_i$ are $\CC^*$--invariant. Since the $\CC^*$--action on $\cC_e \cong \PP^1$(since orbifold points are concentrated at $\cC_\infty$) is nontrivial, there is only one(up to scalar multiplication) invariant section of $\cL$ by Remark \ref{rema:invsec}. Therefore, $\phi_1,\phi_2,\phi_3$ are proportional to each other.
      
      \smallskip
      
      By the same reason, $\theta_1,\theta_2,\theta_3$ are proportional to each other. Moreover $(\theta_1,\theta_2,\theta_3)$ is nowhere vanishing and we may assume that $\theta_1$ is nowhere vanishing. Hence we have $\cL_2 \cong \cO_{\cC_e}$. Note that $(\theta_1,\theta_2,\theta_3)$ induces a constant morphism $\cC_{e} \to \PP^2$. For other types of edges( ($1$--$\infty$)--type and ($0$--$\infty$)--type ), this still holds.
      
      \smallskip
      
Since $\nu \equiv 1$, $\cN \cong \cO_{\cC_{e}}$ and $\mu$ is a section of $\cL_1$. Since $(\phi,\mu)$ is nonvanishing, it induces a morphism $\cC_{e} \to \PP^3$. Therefore, at least one of $\set{\phi_1,\phi_2,\phi_3,\mu}$ is a nontrivial section of $\cL_1$, so that $\deg \cL_1 \geq 0$. But, by arguments in (a) and (b), $\phi_i=0$ on $\cC_e \cap \cC_1$ and $\mu=0$ on $\cC_e \cap \cC_0$. Therefore $\cL_1 > 0$. 

Since $\phi_1,\phi_2,\phi_3$ are proportional to each others, its image is a line $L$ in $\PP^3$. Therefore, $(\mu,\phi,\theta)$ induces a branched cover over $L \times \set{pt} \subset \PP^3 \times \PP^2$. The line $L$ is of the form
      \[
        L_{a; b} = \{\, ([t: sa_1: sa_2: sa_3], \ [b_1: b_2:b_3]) \mid [t:s] \in \mathbb
        P^1\} \subset \mathbb P^3\times \mathbb P^2 \, .
      \]
      
      
      \medskip

\noindent \textbf{($\CC^*$--weights on the tangent spaces of $\cC_{e}$ at two endpoints)} $\phantom{a}$ \\
\indent The induced $\CC^*$--action on $\cC_{e}$ fixes two endpoints $p_0 = \cC_{e} \cap \cC_0$ and $p_1 = \cC_{e} \cap \cC_{1}$. Thus we can consider weights of the $\CC^*$--actions at the tangent spaces $T_{\cC_{e}}|_{p_0}$ and $T_{\cC_{e}}|_{p_1}$. For this, it is enough to determine what the induced $\CC^*$--action on $\cC_{e} \cong \PP^1$ is. We may assume that $p_0 = [0:1]$ and $p_1 = [1:0]$. The $\CC^*$--action on $\cC_{e}$ gives us an automorphism $h(t)$ on $\cC_{e}$ for an element $t\in \CC^*$. This should be of the form :     
      \[
	  h(t)([x:y]) = [t^k x:y], \ k \in \QQ.  
      \]    
Let $d_{e} = \deg(\cL_1)$. From the discussion on MSP fields over $\cC_v$ above, we know that $\cL_1|_{p_0}$ has weight $0$ and $\cL_1|_{p_1}$ has weight $-1$. Therefore, by Lemma \ref{lemm:linwt}, we have $k = 1/d_{e}$. Therefore, the $\CC^*$--action is of the form $t \cdot [x:y] = [t^{-1/d}x:y]$
      \[
T_{\cC_{e}}|_{p_0} \cong \bL_{\frac{1}{d_{e}}}, \ T_{\cC_{e}}|_{p_1} \cong T_{\cC_{e}}|_{p_0}^{-1} \cong \bL_{\frac{-1}{d_{e}}}.      
      \]

\noindent From the stability condition (2) of Definition \ref{cond:stab}, we have 
$$\deg \cL_1 + \deg (\omega_\cC^\log)|_{\cC_{e}}^{\otimes \left( \frac{1}{3} \right)+} > 0. $$ 
Since $\cC_{e}$ has at most $2$ special points, we have $d_{e} > 0$.
      

\medskip

\item[(e)]
      \textbf{($1$--$\infty$)--edge}

      Here $e \in E_{1\infty}(\Gamma_\xi)$ and $\xi|_{\cC_e}$ satisfies $\mu\equiv 1, \phi \equiv 0$, 
      $\cL_1 \otimes \cN \cong \bL_{-1}$. For the same reason as above, $\cL_2 \cong \cO_{\cC_e}$.
      Furthermore,
      \[
        \begin{aligned}
          & \rho\in \rH^0(\cL_1^{-3} \otimes \cL_2^{-3} \otimes
          \omega_{\cC}^{\log}) = \rH^0(\cL_1^{-3} \otimes \mathcal
            \cO_{\mathbb P^1}(-3)),\\
          & \nu \in \rH^0(\cN) = \rH^0(\cL_1^{-1} \otimes \bL_{-1})
        \end{aligned}
      \]
      gives a branched cover of a weighted projective space $\mathbb P(1,3)$.

\medskip      
\noindent \textbf{($\CC^*$--weights on the tangent spaces of $\cC_{e}$ at two endpoints)} $\phantom{a}$ \\
\indent Let $p_1$ and $p_\infty$ be the two points $\cC_e \cap \cC_1$ and $\cC_e \cap \cC_\infty$ respectively. Here we need to consider cases (1) $p_\infty$ is a scheme point; (2) $p_\infty$ is an orbifold point. We first consider the case (1). We have $\cC_{e} \cong \PP^1$ and we may assume that $p_1 = [0:1]$ and $p_\infty = [1:0]$. 
From the observations over $1$--vertex and $\infty$--vertex, we have $\cL_1|_{p_1} \cong \bL_{-1}$ and $(\cL_1^{-3} \otimes \omega^\log_{\cC}|_{\cC_{e}} )|_{p_\infty} \cong \bL_0$. Let $w_\infty$ be the weight of the $\CC^*$--action over $\cL_1|_{p_\infty}$. Recall that the induced $\CC^*$--action is given by $t \cdot [x:y] = [t^k x : y]$. By Lemma \ref{lemm:linwt}, we have $k = (-1-w_\infty)/d_{e}$, where $d_{e} := \deg\cL_1|_{\cC_e}$. Moreover, we have $T_{\cC_{e}}|_{p_\infty} \cong T_{\cC_{e}}|_{p_1}^{-1} \cong \bL_{-k}$. 

Here there are two subcases, (1--1) $p_\infty$ is not a special point, (1--2) $p_\infty$ is a special point. In case (1--1), so that $p_\infty$ is not a special point. Then we have $(\omega^\log_{\cC}|_{\cC_{e}})|_{p_\infty} \cong \bL_{k}$. Since $\cL_1^{-3} \otimes \omega^\log_{\cC_{e}}|_{p_\infty} \cong \bL_0$, we have $3 + 3kd_{e} + k=0$. Hence we have $k = \frac{-3}{3d_{e} +1 }$, and we have
\[
T_{\cC_{e}}|_{p_1} \cong \bL_{\frac{-3}{3d_{e} + 1}}, \  T_{\cC_{e}}|_{p_\infty} \cong T_{\cC_{e}}|_{p_1}^{-1} \cong \bL_{\frac{3}{3d_{e} + 1}}.
\]
On the other hand, we have $\omega_{\infty} = -1 - k d_{e} = \frac{-1}{3d_{e} + 1}$ and $\cL_{p_\infty} \cong \bL_{\frac{-1}{3d_{e} + 1}}$. 


\smallskip

In case (1--2), $p_\infty$ is a special point. Since $p_\infty$ is a scheme point, we have $\cC_{e} \cong \PP^1$ and we may assume that $p_1 = [0:1]$ and $p_\infty = [1:0]$. Let $z$ be a local coordinate around $p_\infty \subset \cC_{\infty}$, then $dz/z$ generates $\omega^\log|_{\cC_{\infty}}$ around $p_\infty$. Since $dz/z$ is $\CC^*$--invariant, we have $(\omega^\log_{\cC}|_{\cC_{e}})|_{p_\infty} \cong \bL_0$. Then Lemma \ref{lemm:linwt} says that $k = 1/d_{e}$. So that we have
\[
T_{\cC_{e}}|_{p_1} \cong \bL_{\frac{-1}{d_{e}}}, \  T_{\cC_{e}}|_{p_\infty} \cong T_{\cC_{e}}|_{p_1}^{-1}.
\]
Next we consider the case (2), when $p_\infty$ is an orbifold point. Then we have $\cC_{e} \cong \PP(1,3)$. Using the arguments for cases (1-1), (1-2) combined with Lemma \ref{lemm:linwt2}, we have the following.

In case (2-1), $p_\infty$ is not a special point, we have
\[
T_{\cC_{e}}|_{p_1} \cong \bL_{\frac{-3}{3d_{e} + 1}}, \  T_{\cC_{e}}|_{p_\infty} \cong \bL_{\frac{1}{3d_{e} + 1}}.
\]


In case (2-2), $p_\infty$ is a special point, we have
\[
T_{\cC_{e}}|_{p_1} \cong \bL_{\frac{-1}{d_{e}}}, \  T_{\cC_{e}}|_{p_\infty} \cong \bL_{\frac{1}{3d_{e}}}.
\]

\noindent From the stability condition (2) of Definition \ref{cond:stab}, we have 
$$\deg \cL_1^{-1} + \deg (\omega_\cC^\log)|_{\cC_{e}}^{\otimes \left( \frac{1}{3} \right)+} > 0.$$ 
Since $\cC_{e}$ has at most $2$ special points, we have $d_{e} < 0$.

\medskip

\item[(f)]
      \textbf{($0$--$\infty$)--edge}
      
      Here $e \in E_{0\infty}(\Gamma_\xi)$ and $\xi|_{\cC_e}$ satisfies $\rho \neq 0$, $\phi=(\phi_1,\phi_2,\phi_3) \neq 0$. 
We also note that the two fields $(\mu, \nu)$ give a branched cover of some $\mathbb P^1$. 

\medskip

\noindent \textbf{($\CC^*$--weights on the tangent spaces of $\cC_{e}$ at two endpoints)} $\phantom{a}$ \\
\indent Let $p_0$ and $p_\infty$ be the two points $\cC_e \cap \cC_0$ and $\cC_e \cap \cC_\infty$ respectively. We have $\cN|_{p_0}$ has $\CC^*$--weight $0$. Moreover, since $(\cL_1 \otimes \cN)|_{p_\infty} \otimes \bL_1 \cong \bL_0$ by the result for $\infty$--vertex, we have $\cN|_{p_\infty} \cong \bL_{-1}$. When we apply Lemma \ref{lemm:linwt}, Lemma \ref{lemm:linwt2} to $\cN$, we can compute tangent spaces at $p_0,p_\infty$ as follows. We will consider two cases, when $p_{\infty}$ is a scheme point or an orbifold point. Recall that $\deg(\cN) =d_{e \infty}$.
      
\smallskip

\noindent Case (1): $p_\infty$ is a scheme point. In this case, we have $\cC_{e} \cong \PP^1$. We have
\[
T_{\cC_{e}}|_{p_0} \cong \bL_{\frac{1}{d_{e\infty}}}, \  T_{\cC_{e}}|_{p_\infty} \cong \bL_{\frac{-1}{d_{e\infty}}}. 
\]

\smallskip

\noindent Case (2): $p_\infty$ is an orbifold point. In this case, we have $\cC_{e} \cong \PP(1,3)$. We have
\[
T_{\cC_{e}}|_{p_0} \cong \bL_{\frac{1}{d_{e\infty}}}, \  T_{\cC_{e}}|_{p_\infty} \cong \bL_{\frac{-1}{3d_{e\infty}}}. 
\]

\noindent From the stability condition (2) of Definition \ref{cond:stab}, we have 
$$\deg \cN^2 + \deg (\omega_\cC^\log)|_{\cC_{e}}^{\otimes \left( \frac{1}{3} \right)+} > 0.$$ 
Since $\cC_{e}$ has at most $2$ special points, we have $d_{e\infty} > 0$.


\end{itemize}




\subsection{Flat graphs and decomposition of the fixed loci}
$\phantom{a}$

  First we will introduce flat graphs in \cite{CLLL, CL20van, CGLL} in our case, the MSP fields to $\wtil{X}$. Let $\cW^T \subset \cW$ be the fixed locus of the $\CC^*$--action and consider an MSP field $\set{\xi} \in \cW^T$. Let $\Gamma_\xi$ be the localization graph associated to $\xi$.
  Let $q \in \cC(\xi)$ be a separating node, such that $\cC(\xi) = \cC_1 \cup \cC_2$ and $q = \cC_1 \cap \cC_2$. Then $q$ is called \textbf{T-balanced} if $T_q \cC_1 \otimes T_q \cC_2 \cong \bL_0$. 
  In the localization graph, $q$ corresponds to an unstable vertex $q \in V^U(\Gamma_\xi)$ and there are two edges $e, e'$ attached to it. Then $q$ is T-balanced if and only if $T_q \cC_e \otimes T_q \cC_{e'} \cong \bL_0$.
  Then we have the following analogue of \cite[Lemma 2.14]{CLLL}, \cite[Lemma 2.6]{CL20van}
\begin{lemm}\label{lemm1}
Let $q$ be an unstable vertex of $\Gamma_\xi$ and let $e,e'$ be edges attached to $q$. Then $q$ is T-balanced if and only if $q\in V_1(\Gamma_\xi)$, $d_{e} + d_{e'} = 0$, and 
$( \cC_e \cup \cC_{e'} ) \cap \cC_\infty $ is a node or a marking of $\cC$.
\end{lemm}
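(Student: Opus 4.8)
The plan is to analyze an unstable vertex $q \in V^U(\Gamma_\xi)$ with its two attached edges $e, e'$ directly in terms of the $\CC^*$-weight computations of Section \ref{sect:MSPdescript}. The node $q$ corresponds to a point where $\cC_e$ and $\cC_{e'}$ meet, and $q$ lies in some $\cC_i$, $i \in \{0,1,\infty\}$; since $q$ is unstable it is a single point, not a curve. The $T$-balanced condition $T_q\cC_e \otimes T_q\cC_{e'} \cong \bL_0$ is an identity of $\CC^*$-weights, so I would first enumerate, for each possible type of $q$ (a $0$-vertex, $1$-vertex, or $\infty$-vertex, and in the $\infty$ case whether $q$ is a scheme point or orbifold point), the types of edges $e, e'$ that can be attached, and read off the tangent weights at $q$ from cases (d), (e), (f) of Section \ref{sect:MSPdescript}.

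First I would dispose of the cases $q \in V_0$ and $q \in V_\infty$. If $q \in V_0$, the edges attached to $q$ are $(0\text{-}1)$-edges or $(0\text{-}\infty)$-edges; from case (d) a $(0\text{-}1)$-edge contributes weight $1/d_e > 0$ at its $0$-endpoint, and from case (f) a $(0\text{-}\infty)$-edge contributes weight $1/d_{e\infty} > 0$ at its $0$-endpoint. Hence $T_q\cC_e$ and $T_q\cC_{e'}$ both have strictly positive weight, and their tensor product cannot be $\bL_0$, so $q$ is not $T$-balanced. If $q \in V_\infty$, a symmetric argument applies: from cases (e) and (f), every edge attached to an $\infty$-endpoint (in any of the subcases 1-1, 1-2, 2-1, 2-2 for $(1\text{-}\infty)$-edges, and both cases for $(0\text{-}\infty)$-edges) has tangent weight at that endpoint which is a \emph{positive} multiple of $1/(3d_{e} )$-type quantities with the appropriate sign — one checks in each listed case that the $\infty$-end weight is strictly positive — so again the tensor product is nontrivial. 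The one subtlety to handle carefully here is that an unstable $\infty$-vertex could in principle carry legs or be the meeting point of an edge with a marking; but an unstable vertex with no edges is not a node, and an unstable $\infty$-vertex with exactly the two edges $e,e'$ is covered by the weight computation.

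This leaves $q \in V_1$, which is the substantive case. The edges attached to a $1$-vertex are $(0\text{-}1)$-edges (contributing, at the $1$-endpoint, weight $-1/d_e$ with $d_e > 0$ by case (d)) and $(1\text{-}\infty)$-edges (contributing, at the $1$-endpoint, weight $-1/d_e$ or $-3/(3d_e+1)$ with $d_e < 0$ by case (e), depending on the subcase). So I would split into the three combinations. If both $e,e'$ are $(0\text{-}1)$-edges, the $1$-end weights are $-1/d_e$ and $-1/d_{e'}$ with $d_e, d_{e'} > 0$, both negative, so no cancellation — not $T$-balanced. If both are $(1\text{-}\infty)$-edges, both $1$-end weights are negative (since the relevant $d$'s are negative, $-1/d_e > 0$... wait, here the weights $T_{\cC_e}|_{p_1}$ listed in case (e) are $\bL_{-1/d_e}$ or $\bL_{-3/(3d_e+1)}$ with $d_e<0$, hence these weights are positive), so again both have the same sign and no cancellation. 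The remaining combination is $e$ a $(0\text{-}1)$-edge and $e'$ a $(1\text{-}\infty)$-edge: the $1$-end weight from $e$ is $-1/d_e$ with $d_e>0$ (negative), and from $e'$ it is positive; $T$-balance forces these to be negatives of each other, i.e. $1/d_e = (\text{the }e'\text{-weight})$, and matching against the explicit formulas in case (e) pins down both that $d_e = -d_{e'}$ (equivalently $d_e + d_{e'} = 0$, after sorting out the orbifold-vs-scheme and special-vs-nonspecial subcases — the subcases where $p_\infty$ is \emph{not} a special point give weight $3/(3d_{e'}+1)$ which cannot equal $1/d_e$ for the degrees involved, forcing $p_\infty$ to be a special point, i.e. $(\cC_e\cup\cC_{e'})\cap\cC_\infty$ is a node or marking) and that the $\infty$-end of $\cC_{e'}$ carries a special point. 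Conversely, given $q\in V_1$, $d_e+d_{e'}=0$, and the $\infty$-end special, the same formulas show $T_q\cC_e\otimes T_q\cC_{e'}\cong\bL_0$.

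The main obstacle I anticipate is bookkeeping the $\PP(1,3)$ / orbifold subcases in case (e): one must make sure that when $p_\infty$ is an orbifold point the factor-of-$3$ discrepancies in Lemma \ref{lemm:linwt2} are tracked correctly, and that the "not a special point" subcases (2-1) and (1-1), which give the anomalous weight $\pm 3/(3d_e+1)$, are genuinely excluded by the $T$-balance equation rather than merely being awkward. I would therefore organize the $q\in V_1$ analysis as a short case table listing, for each subcase, the pair of $1$-end tangent weights, and check the single scalar equation $w(e)+w(e')=0$ in each row; the rows that survive are exactly those with $d_e+d_{e'}=0$ and $(\cC_e\cup\cC_{e'})\cap\cC_\infty$ a special point, which is the claim.
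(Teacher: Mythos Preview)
Your approach matches the paper's: a sign analysis of the tangent weights at $q$ to rule out levels $0$ and $\infty$, then the explicit weight equation at level $1$ with the special/non-special subcase split for the $\infty$-endpoint of the $(1\text{--}\infty)$-edge.

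Two corrections. First, at an $\infty$-vertex the tangent weights listed in cases (e) and (f) are all \emph{negative} (e.g.\ $1/d_{e}$ with $d_{e}<0$, or $-1/d_{e\infty}$ with $d_{e\infty}>0$), not positive as you state; your conclusion ``same sign, so no cancellation'' is unaffected, but fix the sign. Second, the step you leave as ``cannot equal $1/d_e$ for the degrees involved'' is precisely where the paper's argument has content, and you should spell it out: when $\cC_{e'}\cap\cC_\infty$ is not a special point it is in particular a scheme point (orbifold points of $\cC$ occur only at markings and nodes), so both $\cC_e$ and $\cC_{e'}$ are honest $\PP^1$'s and $d_e,d_{e'}\in\ZZ$; the $T$-balance equation in this subcase reads $-1/d_e - 3/(3d_{e'}+1)=0$, i.e.\ $3d_e+3d_{e'}+1=0$, which has no integer solutions. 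This integrality obstruction is what actually forces the $\infty$-endpoint to be special, and without it your case table does not close.
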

\begin{proof} 
First, when $q$ is at level $0$ or level $\infty$, then from the weight computation of tangent spaces in Section \ref{sect:MSPdescript}, we can see $\CC^*$--weights of $T_q \cC_e$ and $T_q \cC_{e'}$ have the same sign. Therefore, it cannot be T-balanced. When $q$ is a level $1$ vertex, the only possible case that $q$ is T-balanced is that $e$ is a ($0$--$1$)--edge and $e'$ is a ($1$--$\infty$)--edge. By results in Section \ref{sect:MSPdescript} we have the following.
When $\cC_{e'} \cap \cC_{\infty}$ is not a special point, we have
\[
T_q \cC_e \otimes T_q \cC_{e'} \cong \bL_{\frac{-1}{d_{e}} + \frac{-3}{3d_{e'} + 1} }.
\]
Therefore, $q$ is T-balanced if and only if $3d_{e} + 1 + 3d_{e'} = 0$.
On the other hand, since $ \cC_{e'} \cap \cC_\infty$ is not a special point, $\cC_e$ and $\cC_{e'}$ are both schemes. Therefore $d_{e}, d_{e'} \in \ZZ$ and $3d_{e} + 1 + 3d_{e'} = 0$ cannot be satisfied.
When $\cC_e' \cap \cC_{\infty}$ is a special point, we have
\[
T_q \cC_e \otimes T_q \cC_{e'} \cong \bL_{\frac{-1}{d_{e}} + \frac{-1}{d_{e'}} }. 
\]
Therefore, $q$ is T-balanced if and only if $d_{e} + d_{e'} = 0$.

\end{proof}

\begin{defi}
Let $G_{g,\gamma, \bold{d} }$ be the set of decorated graphs $\Gamma$ such that $(g(\Gamma), \gamma(\Gamma), \bold{d}(\Gamma)) = (g,\gamma,\bold{d})$. A graph $\Gamma \in G_{g,\gamma, \bold{d}}$ is called flat if all unstable vertices are $T$-unbalanced. Here we use the result of Lemma \ref{lemm1} as a definition of $T$-balanced vertices.
\end{defi}

Let $(G_{g,\gamma, \bold{d}})^{\fl}$ be the set of flat graphs. Same as in \cite[Section 2.4]{CLLL}, from a decorated graph $\Gamma = \Gamma_\xi$, we obtain a flat graph $\Gamma^\fl$ by \textbf{`flattening'} the graph $\Gamma$. Consider a decorated graph $\Gamma \in G_{g,\gamma,\bold{d}}$ and a balanced vertex $v \in V_1(\Gamma)$. Then $v$ has two edges attached to it, say $e \in E_{1\infty}(\Gamma)$ and $e' \in E_{01}(\Gamma)$. Let $v_\infty$, $v_0$ be the endpoints of $e,e'$ different from $v$. Flattening of the balanced vertex $v$ is that we eliminate the vertex $v$ and the edge $e,e'$ from the graph $\Gamma$ and add a new edge $\wtil{e} \in E_{0\infty}$ connecting the vertices $v_0$, $v_\infty$. We decorate the new edge $\wtil{e}$ by the degree vector $\bold{d}_{\wtil{e}} = (d_{e \infty},d_{e \infty })$. The flattened graph $\Gamma^\fl$ is obtained by flattening all balanced vertices of $\Gamma$. Then we have the following decomposition of fixed loci, as a direct analogue of the decomposition result in \cite{CGLL}.

\begin{prop}
$$\cW_{g,\gamma,\bold{d}}^T = \bigcup_{\Gamma \in (G_{g,\gamma,\bold{d}})^\fl } \cW_{\Gamma} $$
where $\cW_{\Gamma}$ is a locus of MSP fields $\xi$ such that $(\Gamma_\xi)^\fl = \Gamma$. 
\end{prop}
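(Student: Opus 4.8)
The plan is to establish the two set-theoretic inclusions and then upgrade the equality to a decomposition of $\cW_{g,\gamma,\bold{d}}^T$ into open-and-closed substacks, following the strategy of \cite[Section~2.4]{CLLL} and \cite{CGLL}. The input genuinely specific to $X_{3,3}$ is that the classification of vertex and edge types of Section~\ref{sect:MSPdescript}, the description of T-balanced vertices in Lemma~\ref{lemm1}, and the flattening operation introduced above are mutually compatible; in particular one must keep track of the fact that an edge may be an orbifold curve $\PP(1,3)$ and that $\cN$, $\cL_1$ may have fractional degree along an edge to $\cC_\infty$.

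For the inclusion $\cW_{g,\gamma,\bold{d}}^T\subseteq\bigcup_{\Gamma\in(G_{g,\gamma,\bold{d}})^\fl}\cW_\Gamma$, take $\xi\in\cW_{g,\gamma,\bold{d}}^T$. Section~\ref{sect:assocgraph} attaches to $\xi$ a decorated graph $\Gamma_\xi$, and additivity of degree together with the genus formula for the decomposition of $\cC_\xi$ gives $(g(\Gamma_\xi),\gamma(\Gamma_\xi),\bold{d}(\Gamma_\xi))=(g,\gamma,\bold{d})$, so $\Gamma_\xi\in G_{g,\gamma,\bold{d}}$. It then suffices to see that flattening a T-balanced vertex preserves the triple $(g,\gamma,\bold{d})$, for then $(\Gamma_\xi)^\fl$ is flat and still lies in $G_{g,\gamma,\bold{d}}$. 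By Lemma~\ref{lemm1} a T-balanced vertex $v$ is a genus $0$, level $1$ vertex carrying exactly two edges $e\in E_{1\infty}$ and $e'\in E_{01}$; replacing $v,e,e'$ by a single edge $\wtil e\in E_{0\infty}$ joining the remaining endpoints leaves all vertex genera and the first Betti number of the underlying graph unchanged, and touches no leg, so $g$ and $\gamma$ are preserved. For the degree: on the level $1$ vertex $\cL_1\cong\cN\cong\cO_{\cC_v}$, so $\bold{d}(v)=(0,0)$; on the $(1\infty)$-edge $\cL_1\otimes\cN\cong\bL_{-1}$, so $\bold{d}(e)=(0,d_{e\infty})$; on the $(01)$-edge $\cN\cong\cO_{\cC_e}$, so $\bold{d}(e')=(d_{e'0},0)$; and the balancing condition $d_e+d_{e'}=0$ of Lemma~\ref{lemm1} forces $d_{e'0}=d_{e\infty}$. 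Hence $\bold{d}(v)+\bold{d}(e)+\bold{d}(e')=(d_{e\infty},d_{e\infty})$, exactly the decoration $\bold{d}_{\wtil e}$ prescribed by the flattening. This proves the inclusion; the reverse one is immediate since $\cW_\Gamma\subseteq\cW_{g,\gamma,\bold{d}}^T$ by construction, and as $\xi$ determines $(\Gamma_\xi)^\fl$ the union is in fact disjoint.

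It remains to check that each $\cW_\Gamma$ is open and closed in $\cW_{g,\gamma,\bold{d}}^T$, equivalently that $(\Gamma_\xi)^\fl$ is locally constant. By the analysis of Section~\ref{sect:MSPdescript} the field configuration over each piece $\cC_v$, $\cC_e$ is $\CC^*$-rigid, apart from the choice of the stable map carried by a stable vertex, which does not alter $\Gamma_\xi$; moreover the edge curves $\cC_e$ are themselves rigid ($\cong\PP^1$ or $\PP(1,3)$ with a prescribed degree), and every node joining an edge to a stable vertex has smoothing direction of nonzero $\CC^*$-weight (the tangent weights in Section~\ref{sect:MSPdescript} at the edge endpoints are nonzero) and so is unsmoothable in a $\CC^*$-fixed family. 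Thus the only modification of $\Gamma_\xi$ available inside $\cW_{g,\gamma,\bold{d}}^T$ is the smoothing of a node at an unstable vertex, which is possible precisely when the smoothing direction $T_q\cC'\otimes T_q\cC''\cong\bL_0$, i.e.\ when that vertex is T-balanced; by Lemma~\ref{lemm1} this is exactly the flattening of a balanced level $1$ vertex, under which $(\Gamma_\xi)^\fl$ is unchanged. Hence $\cW_\Gamma$ is a union of connected components of $\cW_{g,\gamma,\bold{d}}^T$, and, there being only finitely many flat graphs, it is also open as the complement of the closed union of the $\cW_{\Gamma'}$ with $\Gamma'\neq\Gamma$.

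I expect the last step to be the main obstacle: one has to be confident that the rigidity and node-smoothing analysis of \cite{CGLL,CLLL} carries over verbatim to the $X_{3,3}$ setting --- with orbifold edges $\PP(1,3)$ and line bundles of fractional degree along edges to $\cC_\infty$ --- so that flattening and its inverse really are the only ways of altering the level skeleton inside $\cW_{g,\gamma,\bold{d}}^T$ and are consistent with every decoration. The weight computations of Section~\ref{sect:MSPdescript}, together with Lemmas~\ref{lemm:linwt} and \ref{lemm:linwt2}, are precisely what is needed to carry this through.
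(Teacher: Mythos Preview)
The paper does not supply a proof of this proposition; it merely states it ``as a direct analogue of the decomposition result in \cite{CGLL}''. Your proposal is therefore not competing with an argument in the paper but is rather filling in the details the paper leaves to that reference, and it does so correctly and along exactly the lines the paper intends: the set-theoretic equality is reduced to checking that flattening preserves $(g,\gamma,\bold{d})$, and the open-and-closedness of the $\cW_\Gamma$ is argued via the rigidity of edge components and the fact that the only $T$-invariant node smoothings are at $T$-balanced vertices (Lemma~\ref{lemm1}), which leave $(\Gamma_\xi)^\fl$ unchanged. Your degree check $\bold{d}(v)+\bold{d}(e)+\bold{d}(e')=(d_{e\infty},d_{e\infty})=\bold{d}_{\wtil e}$ is the one computation genuinely specific to the $X_{3,3}$ decoration conventions, and it is correct.

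One minor remark: the proposition as stated only asserts a set-theoretic union, not a decomposition into open-and-closed substacks, so the second half of your argument proves more than what is literally claimed (though it is of course what one needs later for localization, and is implicit in the paper's use of the proposition).
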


\medskip

\subsection{Regular and irregular graphs}




\begin{defi}[Regular and irregular graphs]
Let $\Gamma \in (G_{g,\gamma, \bold{d}} )^\fl$ be a flat graph. It is called regular if $E_{0\infty}(\Gamma)=\emptyset$. It is called irregular if it is not regular.
\end{defi}

\begin{rema}
The above definition of irregular graph differs slightly from the original one in \cite{CLLL, CL20van} which considered the quintic 3-fold case. In the original definition, even if $E_{0 \infty}(\Gamma)= \varnothing$, $\Gamma$ can be irregular if the monodromy types of legs(corresponding to markings) and monodromy type of flags(corresponding to nodes) do not satisfy certain condition. See \cite[Definition 2.8]{CL20van} for details.

But in this paper, we only show that the contributions corresponding to localization graphs containing $0$--$\infty$ edges vanish.
\end{rema}

We will show the localized virtual cycle $[\cW_\Gamma]\virt\loc$ does not contribute to invariants when $\Gamma$ is an irregular graph and not a pure loop. 
Here \textbf{pure loop} means a graph which has no legs, has no stable vertices, and for every vertex exactly two edges are attached to it.

\section{Proof for irregular vanishing}\label{sect:insepvan}

  In this section, for an irregular graph $\Gamma$ which is not a pure loop, we will show that integrations over the (cosection) localized virtual cycle $[\cW_\Gamma]\virt\loc$ vanish. 
  Recall that our moduli space of MSP fields, $\cW$ has a natural $\CC^*$--invariant perfect obstruction theory. 
   Let $\cW_{\Gamma}^- \subset \cW_{\Gamma}$ be the degeneracy loci of the cosection $\sigma_\gamma : h^1(E_{\cW_{\Gamma}}\dual) \to \cO_{\cW_{\Gamma}}$ which is induced from the cosection $\sigma$. Since $\cW^-$ is proper, $\cW^-_\Gamma$ is also proper. 

For a cycle $A \in A_*(\cW_{\Gamma}^-)$, we call $A\sim 0$ if there exists a proper closed substack $Z \subset \cW_{\Gamma}$ containing $\cW_{\Gamma}^-$ and $j_*A = 0$ for the inclusion $j : \cW_{\Gamma}^- \hra Z$. It is clear that $A \sim 0$ implies integration over $A$ always vanishes. In this section, we will show the following.
\begin{align}\label{eq:insepvan}
\left[ \cW_\Gamma \right]\virt\loc \sim 0.
\end{align}

Note that we can use virtual localization formula \cite{CKL} for cosection localized virtual cycles. In the proof, we will generally follow the strategy in \cite{CL20van}, which showed irregular vanishing on MSP fields for quintic 3-folds. In the following we will show that it is enough to prove the special case where the (decorated) graph $\Gamma$ satisfies that $V_1(\Gamma)  = \emptyset$ and $\Gamma$ has no strings, and no legs $s \in S^{(1,\rho)} \cup S^1$. Here, \textbf{`strings'} are edges $e \in E_{0 \infty}(\Gamma)$, whose vertex $v \in V_0(\Gamma)$ is unstable and there is no other edge attached to $v$.
  The following lemma, which is an analogue of a lemma in \cite{CLLL} is necessary for our later use.
\begin{lemm}\cite[Lemma 2.12]{CLLL} \label{lemm2}
Let $\xi \in \cW^T$ and $e \in E_{0 \infty}(\Gamma_\xi)$. Then $\cC_e \cong \PP^1$, $\cL_1|_{\cC_e} \cong w^{\log}_{\cC}|_{\cC_e} \cong \cO_{ \cC_e}$, and $\cN|_{ \cC_e} \cong \cO_{ \cC_e}(d_{e\infty})$. 
\end{lemm}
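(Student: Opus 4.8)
The plan is to prove Lemma~\ref{lemm2} by analyzing the restriction $\xi|_{\cC_e}$ of a $\CC^*$-fixed MSP field to a $(0$--$\infty)$-edge, using the structural results of Section~\ref{sect:MSPdescript}(f) together with the flatness of the ambient graph. First I would recall that, by the flattening procedure described before the decomposition proposition, a $(0$--$\infty)$-edge in a flat graph $\Gamma^\fl$ either is a genuine edge in $E_{0\infty}(\Gamma_\xi)$ coming from $\xi$ directly, or is created by flattening a $T$-balanced $1$-vertex sitting between a $(0$--$1)$-edge and a $(1$--$\infty)$-edge. In both descriptions the relevant piece of the curve is the one where $\mu\equiv 1$, $\nu$ and $\rho$ do not vanish identically, and the two sections $(\mu,\nu)$ give a branched cover of some $\PP^1$; the orbifold structure, if any, would be concentrated at the $\infty$-end. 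So the first order of business is to rule out the orbifold case and show $\cC_e\cong\PP^1$: since $\mu\equiv 1$ forces $\cL_1\otimes\cN\cong\cO_{\cC_e}$ up to a trivial $\CC^*$-twist, and $\rho\equiv 1$ (from $\nu$ not vanishing together with stability~(1)) forces $\cL_1^{-3}\otimes\cL_2^{-3}\otimes\omega^{\log}_{\cC}|_{\cC_e}\cong\cO_{\cC_e}$, combining these with $\cL_2\cong\cO_{\cC_e}$ (proved for all edge types in Section~\ref{sect:MSPdescript}(d)) gives a numerical relation on $\deg\cL_1$ and the orbifold data of $\omega^{\log}$; the presence of an orbifold point at the $\infty$-end would make $\deg(\omega^{\log}_{\cC}|_{\cC_e})$ non-integral in a way incompatible with $\cL_1^{-3}\otimes\cL_2^{-3}\otimes\omega^{\log}|_{\cC_e}$ being a line bundle pulled back from the coarse space, unless that orbifold point is cancelled. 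Alternatively — and this is cleaner — I would use that the node $(\cC_e\cup\cC_{e'})\cap\cC_\infty$ created or sitting at the $\infty$-vertex is forced by the flatness/$T$-unbalanced analysis of Lemma~\ref{lemm1} to carry no monodromy in the $E_{0\infty}$ situation, so $\cC_e$ meets $\cC_\infty$ at a scheme point and hence $\cC_e\cong\PP^1$.

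Once $\cC_e\cong\PP^1$ is established, the degree computation is routine: write $d:=\deg\cL_1|_{\cC_e}$ and $d_{e\infty}:=\deg\cN|_{\cC_e}$. From $\mu\equiv 1$ we get $\cL_1\otimes\cN\cong\cO_{\PP^1}$ as a bundle (ignoring $\CC^*$-weights), so $d+d_{e\infty}=0$, i.e. $\cN\cong\cO_{\PP^1}(d_{e\infty})$ and $\cL_1\cong\cO_{\PP^1}(-d_{e\infty})$. From $\rho\equiv 1$ and $\cL_2\cong\cO_{\PP^1}$ we get $\cL_1^{-3}\otimes\omega^{\log}_{\PP^1}\cong\cO_{\PP^1}$, hence $-3d+\deg\omega^{\log}_{\PP^1}|_{\cC_e}=0$. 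Since $\cC_e$ has at most two special points, $\deg\omega^{\log}_{\PP^1}|_{\cC_e}\in\{-2,-1,0\}$, and being divisible by $3$ it must equal $0$; thus $d=0$, which forces $\cL_1|_{\cC_e}\cong\cO_{\cC_e}$ and $\omega^{\log}_{\cC}|_{\cC_e}\cong\cO_{\cC_e}$ (the log structure picks up exactly the two special points $p_0,p_\infty$), and correspondingly $d_{e\infty}$ need not be zero so $\cN|_{\cC_e}\cong\cO_{\cC_e}(d_{e\infty})$. This matches the claimed isomorphisms, and the fact $d_{e\infty}>0$ was already recorded at the end of Section~\ref{sect:MSPdescript}(f).

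The main obstacle I anticipate is the bookkeeping around the orbifold/scheme dichotomy at the $\infty$-end and the precise meaning of $\deg$ on a possibly stacky $\PP(1,3)$: one must be careful that "$\cL_1|_{\cC_e}\cong\cO_{\cC_e}$" in the statement really is asserting triviality on the nose (including that the curve is a scheme), so the argument ruling out $\PP(1,3)$ cannot be skipped. Concretely, on $\PP(1,3)$ one has $\omega_{\PP(1,3)}\cong\cO(-4)$ in the convention where $\cO(3)$ descends, and adding a log marking at the scheme point $p_0$ gives $\omega^{\log}\cong\cO(-1)$, whose pullback degree is $-1/3$ — not an integer multiple of $3$ times anything — so $\cL_1^{-3}\otimes\omega^{\log}|_{\cC_e}\cong\cO$ is impossible; this is the key numerical contradiction, and I would present it via Lemma~\ref{lemm:linwt2}'s framework to keep the weights and degrees consistent. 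Everything else — the identifications $\mu\equiv 1$, $\rho\equiv 1$, $\nu\not\equiv 0$, $\cL_2\cong\cO_{\cC_e}$, and $d_{e\infty}>0$ — is quoted directly from Section~\ref{sect:MSPdescript} and stability, so the proof is short once the $\PP^1$-ness is pinned down.
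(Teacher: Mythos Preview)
Your argument rests on two structural claims about a genuine $(0$--$\infty)$-edge that are both false: you assert $\mu\equiv 1$ and $\rho\equiv 1$ on $\cC_e$. Neither holds. By definition $p_0=\cC_e\cap\cC_0$ lies in $\cC_0=\{\mu=0\}$, so $\mu$ vanishes at $p_0$; the pair $(\mu,\nu)$ gives a branched cover of $\PP^1$, not a trivialization of $\cL_1\otimes\cN$. Likewise, stability~(1) only says $(\rho,\nu)$ is nowhere vanishing, and since $\nu(p_0)\neq 0$ there is no reason $\rho(p_0)\neq 0$ a priori; Section~\ref{sect:MSPdescript}(f) records only $\rho\neq 0$ (not identically zero). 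You have confused the $(0$--$\infty)$-edge with the $\infty$-vertex description in (c), where indeed $\mu\equiv 1$ and $\rho\equiv 1$.

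This error is fatal rather than cosmetic. Your relation $\cL_1\otimes\cN\cong\cO_{\PP^1}$, i.e.\ $d+d_{e\infty}=0$, combined with your own conclusion $d=0$, forces $d_{e\infty}=0$ --- contradicting both the statement (which allows $\cN\cong\cO(d_{e\infty})$ with $d_{e\infty}>0$) and your closing remark that ``$d_{e\infty}$ need not be zero''. The paper's proof instead uses the weaker inputs $\phi\neq 0$ and $\rho\neq 0$ to get two \emph{inequalities}, $\deg\cL_1\ge 0$ and $\deg(\cL_1^{-3}\otimes\omega^{\log}_{\cC}|_{\cC_e})\ge 0$; together with $\deg\omega^{\log}_{\cC}|_{\cC_e}\le 0$ these squeeze $\deg\cL_1=\deg\omega^{\log}_{\cC}|_{\cC_e}=0$ without ever constraining $\deg\cN$. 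The orbifold case is then excluded because faithfulness of the $\mu_3$-action on $\cL_1|_{p_\infty}$ would make $\deg\cL_1$ a non-integer, contradicting $\deg\cL_1=0$. Also note the lemma is stated for $e\in E_{0\infty}(\Gamma_\xi)$, the \emph{unflattened} graph, so your digression on flattening and Lemma~\ref{lemm1} is not relevant here.
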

\begin{proof} 
By abuse of notation, let $\cL_i = \cL_i|_{\cC_e}$ and $\cN = \cN|_{\cC_e}$ here. From the arguments in Section \ref{sect:MSPdescript} (f), $\phi\neq 0$ and $\rho \neq 0$. Since $\phi \neq 0$, at least one of $\phi_i$ is a nonzero section of $\cL_1$ and we may assume that $\phi_1|_{\cC_e} \neq 0$. Therefore $\deg \cL_1 \geq 0$. Moreover, since $\rho \neq 0$, we have 
$\deg\left( \cL_1^{-3} \otimes \cL_2^{-3} \otimes \omega_\cC^\log|_{\cC_e} \right) \geq 0$. 

Since $\cL_2$ is trivial and $\deg \omega_\cC^\log|_{\cC_e} \leq 0$, we conclude that $\deg \cL_1 = \deg \omega_\cC^\log|_{\cC_e} = 0$. Now we verify that $\cC_e = \PP^1$. Let $\cC_e \cap \cC_0 = p_0$ and $\cC_e \cap \cC_\infty = p_\infty$. Since orbifold points of $\cC$ are all contained in $\cC_\infty$, it is enough to show that $p_\infty$ is a scheme point.
If $p_\infty$ were an orbifold point, then because $\Aut(p_\infty)$ acts on $\cL_1|_{p_\infty}$ faithfully, we have $\phi|_{p_\infty}=0$. Therefore, if $p_\infty$ were an orbifold point, then it is the only orbifold point where $\phi_1=0$. Then $\deg \cL_1$ is not an integer, which leads to a contradiction.

Therefore, from the information of degrees of the line bundles, we obtain $\cC_e \cong \PP^1$, $\cL_1|_{\cC_e} \cong w^{\log}_{\cC}|_{\cC_e} \cong \cO_{\cC_e}$, and $\cN|_{ \cC_e} \cong \cO_{ \cC_e}(d_{e\infty})$.
\end{proof}

\subsection{Vanishing for the special case} \label{sect:nostring}

  Here we will show that $[\cW_\Gamma]\virt\loc = 0$ for the special case that there are no strings, no vertex $v \in V_1$ and no markings decorated by $(m=1)$ or $(1,\rho)$. We can prove it by showing that the virtual dimension is negative. 
  Let $D_\Gamma$ be the moduli space parametrizing $(\cC, \Sigma^{\cC}, \cL_1, \cL_2, \cN)$, such that the localization graph for $\cC$ is equal to $\Gamma$ and let $S_v$ be the set of legs connected to $v$. Then we have
\begin{align}\label{eq:vdimcount1}
\dim D_\Gamma & = \sum_{v\in V^S} (3 g_v - 3 + |E_v| + |S_v| ) + \left( \sum_{v \in V^S} 3g_v \right) + 3 h^1(\Gamma) - |E| - 3.
\end{align}
\noindent Next we consider virtual dimensions from deformations of $\mu, \nu$, which is
$$
\chi_T( \cL_1 \otimes \cN \otimes \bL_1) + \chi_T(\cN), \ \ \chi_T(-):=\sum_i (-1)^i \dim(\rH^i(-)^{\CC^*}).
$$
Similar to \cite[p. 7367]{CL20van}, we can show that it is equal to
\begin{align}\label{eq:vdimcount2}
\sum_{v \in V_0} (1 - g_v) + \sum_{v \in V_\infty} (1- g_v).
\end{align}

Next we consider virtual dimensions from deformations of $\phi, \theta, \rho$. Similar to \cite[(4.4)]{CL20van}, we can show that it is equal to 
$$
3 \chi( \cL_1(-\Sigma^{\cC}_{(1,\phi)}) ) + 3 \chi( \cL_2) + \chi( \cL_1^{-3} \otimes \cL_2^{-3} \otimes \omega_{\cC}^{\log} (-\Sigma^{\cC}_{(1,\rho)})).
$$
Moreover, it is equal to
\begin{align}\label{eq:vdimcount3}
& -3|\Sigma^{\cC}_{(1,\phi)}| + 3\left(\deg \cL_1 + 1 - g - \sum_{a \in S^{\neq 0}} \frac{m_a}{3} \right) + 3(\deg \cL_2 + 1 - g) \\ \nonumber
& + \left( 2g-2 + |S| -3 \deg \cL_1 - 3 \deg \cL_2 - |\Sigma^{\cC}_{(1,\rho)}| + 1 - g \right).
\end{align}
Therefore, we have $\vdim \cW_\Gamma$ is equal to \eqref{eq:vdimcount1} + \eqref{eq:vdimcount2} + \eqref{eq:vdimcount3}, which is equal to

\begin{align}\label{eq:vdim}
\sum_{v \in V^S} |E_v| - 2|\Sigma^{\cC}_{(1,\phi)}| + \sum_{v \in V^S_\infty} |S^0_v| - \sum_{a \in S^{\neq 0}}(m_a - 2) - 3(|E|-|V^U|)
\end{align}
where $V^S$ is the set of stable vertices, and $V^U$ is the set of unstable vertices.

\medskip

Next we focus on special sub-graph of $\Gamma$ called chains. Consider a sequence of edges $E_1, \dots, E_k$ where $E_i$ and $E_{i+1}$ are connected at one vertex. Let $v_{i-1}, v_i$ be the vertices of the $i$-th edge $E_i$. Then we call $\set{E_1,\dots, E_k}$ is a chain if $v_1,\dots, v_{k-1}$ are all unstable vertices. Consider a \textbf{maximal chain} $\set{E_1, \dots, E_k}$. Since it is maximal, at least one of $v_0, v_k$ should be stable. 

\begin{rema}
Since $\Gamma$ is not a pure loop, we can exclude the case that $v_0,v_k$ are both unstable. If $\Gamma$ is a pure loop, we have $V^S = S = \emptyset$ and $|E|=|V^U|$ and therefore the virtual dimension \eqref{eq:vdim} becomes zero.
\end{rema}

\noindent Case 1) Only one of $v_0, v_k$ is stable. Let us assume $v_0$ is stable and $v_k$ is unstable. 
Since there are no strings and $V_1(\Gamma)$ by the assumption, $E_k \in E_{0 \infty}(\Gamma)$ and $v_k \in V_{\infty}(\Gamma)$. 

We will show that $|S_{v_k}^{(1,\phi)}| = 1$. Recall that for any MSP field $\xi \in \cW_\Gamma$, we have $(\Gamma_\xi)^\fl = \Gamma$ where $\Gamma_\xi$ the decorated graph associated to $\xi$ defined in Section \ref{sect:assocgraph}, and $(\Gamma_\xi)^\fl$ is its flattening.   We first consider the case that $\cC_{E_k} \subset \cC_\xi$ is irreducible.(So that $E_k \in \Gamma_\xi$ and does not arise as a flattening of a $T$--balanced node.) Then by Lemma \ref{lemm2}, we have $\cC_{E_k} \cong \PP^1$ and $\omega_{\cC_\xi}^\log|_{\cC_{E_k}} \cong \cO_{\cC_{E_k}}.$ If there is no leg attached at $v_k$ then we have $\omega_{\cC_\xi}|_{\cC_{E_k}} \cong \cO_{\PP^1}(-1)$ since the node corresponding to $v_{k-1} $ is the only special point of $\cC_{E_k}$. Thus we obtain a contradiction. 
  Therefore, exactly one leg(since $v_k$ is unstable) is attached at $v_k$, which means that $\cC_{v_k}$ is a marked point. But since $v_k \in V_\infty$ and $\rho \equiv 1$ on $\cC_{v_k}$, the leg must have monodromy type $(1,\phi)$ and $|S_{v_k}^{(1,\phi)}| = 1$. 

  
  Next, consider the case that $E_k$ arises as a flattening of a $T$--balanced node $q$ in $\Gamma_\xi$. Let $e \in E_{1 \infty}(\Gamma_\xi), e' \in E_{01}(\Gamma_\xi)$ be the edges attached to $q$.
  Then, by Lemma \ref{lemm1} and since $v_k$ is unstable, there must exist exactly one leg attached to $v_k$. From Lemma \ref{lemm1}, we have $d_{e} + d_{e'} = 0$. Since there is no orbifold marking over $\cC_{e'}$, $d_{e'}$ is an integer. Therefore $d_{e}$ is an integer as well. Note that $\cC_{E_k}$ only have one marking corresponding to the unique leg $\ell$ attached to $v_k$(=$\cC_{v_k}$). Since $\rho \equiv 1$ over $\cC_\infty$, the marking must have monodromy type $1$ or $2$ or $(1,\phi)$, so that $\phi$ vanishes over the marking. Since $\phi=(\phi_1,\phi_2,\phi_3) \neq 0$ over $\cC_{E_k}$, the marking should be a scheme marking because $d_{e}(=\deg \cL|_{\cC_{e}})$ is an integer. So the marking must have monodromy type $(1,\phi)$. Thus $|S^{(1,\phi)}_{v_k}|=1$. 
  
  Hence we checked $|S^{(1,\phi)}_{v_k}|=1$. Hence the contribution of $\set{E_1,\dots, E_k, v_1, \dots, v_k}$ in \eqref{eq:vdim} is equal to $1 - 2|S^{(1,\phi)}_{v_k}| = -1$.    
  
\medskip  

\noindent Case 2) $v_0, v_k$ are stable. Then the contribution of $\set{E_1,\dots, E_k, v_1,\dots, v_{k-1} }$ is $2-3 = -1$, which is negative. 

\medskip

Therefore, when we consider a graph $\Gamma'$, which is obtained by removing all edges and unstable vertices, and all legs attached to unstable vertices (in fact, connected to $v_0$ or $v_k$). 
Since $E_{0\infty}(\Gamma) \neq \emptyset$, we have
\[
\vdim \cW_\Gamma < \vdim \cW_{\Gamma'}.
\]
Next, apply \eqref{eq:vdim} to $\Gamma'$. Since there are no edges and unstable vertices in $\Gamma'$, it is a sum of two  kinds of contributions:
\begin{itemize}
\item[i)]
Terms from elements in $\cup_{v \in V^S_\infty} S^0_v \,$: Each contribution is $-2 + 1 = -1$. Note that $\cup_{v \in V^S_\infty} S^0_v = \Sigma^{\cC}_{(1,\phi)}$ since there are no unstable vertices and no $(1,\rho)$ vertices by the assumption.
\item[ii)]
Terms associated to $a \in S^{\neq 1}$: Each contribution is $m_a -2 = 0$, hence it is less than or equal to $0$. 
\end{itemize}



  By i), ii), $\vdim \cW_{\Gamma'} \leq 0$. Thus we have $\vdim \cW_\Gamma < \vdim \cW_{\Gamma'} \leq 0$, hence we have
\begin{align}\label{eq:specialvan}
  [\cW_\Gamma]\virt\loc = 0.
\end{align}   
  
  In the following sections, we will show how we can reduce the proof of the irregular vanishing \eqref{eq:insepvan} to the result of this section.



\subsection{Decoupling and trimming of edges $E_{01}$ and $E_{0 \infty}$} \label{sect:decouptrim}

Let $\Gamma$ be a flat graph without string. We will construct a new graph $\Gamma'$ with $E_{01}(\Gamma') = E_{1 \infty}(\Gamma') = V_1(\Gamma') = \emptyset $ in this section. Furthermore we will construct a flat morphism $\psi : \cW_{\Gamma} \to \cW_{\Gamma'}$ such that $\psi^* [ \cW_{\Gamma'} ]\virt\loc = \cW_\Gamma $. This will enable us to focus on the case $V_1(\Gamma) = \emptyset $ as in Section \ref{sect:nostring}.

\smallskip

  As a first step, we consider the process called \textbf{`decoupling'}.
We define leaf edges in the same manner as in \cite{CLLL}.
  An edge $e \in \Gamma$ is called \textbf{`leaf edge'} if one of its vertices (which we call the connecting vertex) is stable or has valency $2$, and the other vertex (which we call the end vertex) is unstable and has valency $1$.
Consider a non-leaf edge $e \in E_{01} \cup E_{1\infty}$ and let $v_1(e)$ be the vertex of $e$ in $V_1$ and let $v_0$(resp. $v_\infty$) be the vertex in $V_0$(resp. $V_\infty$). We note that the only difference from the quintic Mixed-Spin-P field case is that, separating the node corresponding to the flag $(e, v_1(e))$, when $v_1(e)$ is stable, there is a glueing issue (which will be stated below), which does not appear in quintic case.

\medskip

  We consider a new graph $\Gamma'$ obtained from $\Gamma$ by decoupling the flag $(e,v_1(e))$. Precisely, we first remove edge $e$ and then connect a new edge $e'$ connecting $v_0$(resp. $v_\infty$) and a new vertex $v_1' \in V_1(\Gamma')$. Then we connect new legs $\ell$(resp. $\ell'$) decorated by $(1,\phi)$, at $v_1$(resp. $v_1'$). 
  Then the field $\theta$ gives us a morphism $\cW_{\Gamma'} \to \PP^2 \times \PP^2$ given by evaluating the value of the universal morphism $\bar{\theta} : \cC_{\cW_{\Gamma'}} \to \PP^2$ at the marked points $q_\ell, q_{\ell'}$ corresponding to the legs $\ell, \ell'$ in the universal curve. Precisely, $\ell, \ell'$ are sections from $\cW_{\Gamma'}$ to the universal curve $\cC_{\cW_{\Gamma'}}$.

Also we have a morphism $\cW_{\Gamma} \to \PP^2 $ obtained by evaluating the value of the universal morphism $\bar{\theta} : \cC_{\cW_{\Gamma}} \to \PP^2$ at the node $x$ in the universal curve, which corresponds to the flag $(e,v_1(e))$.
Let $u : \cW_\Gamma \to \cW_{\Gamma'}$ be the natural morphism obtained by separating nodes. Then we observe that there is a fiber diagram :
\begin{align}\label{eq:decoupfiber1}
\xymatrix{
\cW_{\Gamma} \ar[r]^-{u} \ar[d]_-{\ev } \ar@{}[rd]|{\Box} & \cW_{\Gamma'} \ar[d]^-{\ev \times \ev } \\
\PP^2 \ar[r]^-{\Delta} & \PP^2 \times \PP^2.
}
\end{align}
We will show that $\Delta^! [\cW_{\Gamma'}]\virt\loc = [\cW_\Gamma]\virt\loc$, using the virtual pull-back formula \cite{Man12}. It is enough to show that there is a distinguished triangle:
\begin{align}\label{eq:distvpull}
ev^*N_{\Delta}[-1] \to u^*E_{\cW_\Gamma'} \to E_{\cW_\Gamma} \stackrel{+1}{\lra} \cdots
\end{align}

  Let us consider a stack $\cD_\Gamma$ parametrizing partial data of MSP fields $\xi \in \cW_\Gamma$ as follows. We say that $\zeta=(\cC, \Sigma^{\cC}, \cL_1, \cN)$ is $T$--equivariant if it is equipped with a morphism $h : \CC^* \to \Aut(\cC, \Sigma^{\cC})$ and linearizations $\tau_{t,\cL_1} : h(t)_* \cL_1 \to \cL_1$, and $\tau_{t,\cN} : h(t)_* \cN \to \cN$. We can consider associated decorated graph $\Gamma_\zeta$ since datum of $\cL_2$ is not used to define $\Gamma_\xi$ in \ref{sect:assocgraph}. Furthermore, we can observe that for an edge $e \in E(\Gamma_\zeta)$, $\zeta$ defines $\CC^*$--weights on the tangent spaces of $\cC_e$ at two endpoints. Therefore, we can define the notion of $T$--balanced node of $\cC$, so that we can define the flattening $(\Gamma_\zeta)^\fl$ of $\Gamma_\zeta$. Then we define $\cD_\Gamma$ to be a stack parametrizing $T$--equivariant $\zeta=(\cC, \Sigma^{\cC}, \cL_1, \cN)$ such that $(\Gamma_\zeta)^\fl = \Gamma$. As in \cite[p. 264]{CLLL}, it is a smooth Artin stack.
  
  Moreover, let $\what{\cD}_\Gamma$ be an Artin stack parametrizing the data $(\cC, \Sigma^{\cC}, \cL_1, \cN, \mu, \nu)$ which satisfies the following:
\begin{enumerate}
\item $(\cC, \Sigma^{\cC}, \cL_1, \cN) \in \cD_\Gamma$. In particular, it is equipped with $T$--equivariant structure.
\item $\mu \in \rH^0(\cL_1\otimes \cN \otimes \bL_1)^T$, $\nu \in \rH^0(\cN)^T$ such that $\mu|_{\cC_0} = \nu|_{\cC_\infty} = 0$ and $\mu|_{\cC_\infty} \equiv 1$, $\nu|_{\cC_0} \equiv 1$, $\mu|_{\cC_1} \equiv 1$, $\nu|_{\cC_1} \equiv 1$.
\item 
For an edge $e \in E_{0\infty}(\Gamma)$ which comes from flattening, $\mu|_{\cC_{1\infty}} \equiv 1$ and $\nu|_{\cC_{01}} \equiv 1$.
\end{enumerate}

This parametrizes partial data of $\CC^*$--fixed MSP fields, forgetting $\vphi,\rho$ fields. Note that this definition of $\what{\cD}_\Gamma$ is parallel to \cite[Definition 3.2]{CL20van} and we can show that it is a smooth Artin stack in the same manner as in \cite[Lemma 3.3]{CL20van}.

Note that the moduli space $\cD_\Gamma$ is different from the moduli space $D_\Gamma$ defined in the beginning of Section \ref{sect:nostring}. Let $p : \cW_\Gamma \to \what{\cD}_\Gamma$, $p' : \cW_{\Gamma'} \to \what{\cD}_{\Gamma'}$ be the forgetful morphisms.

Let $E_{\cW_\Gamma} \to L_{\cW_\Gamma}$, $E_{\cW_\Gamma / \what{\cD}_\Gamma } \to L_{\cW_\Gamma / \what{\cD}_\Gamma}$  be the absolute and relative perfect obstruction theories of $\cW_{\Gamma}$, respectively, which are defined analogously as in \cite{CLLL, CL20van}. Moreover, let $\pi : \cC_{\cW_\Gamma} \to \cW_\Gamma$ be the universal curve then we can observe that
\begin{align}\label{eq:relobs1}
& E_{ \cW_{\Gamma} / \what{\cD}_\Gamma }  = R\pi^T_* \left( \bar{\theta}^* T_{\PP^2} \oplus \cL_1^{\oplus 3} \oplus \left( \cL_1^{-3} \otimes \bar{\theta}^* \cO_{\PP^2}(-3) \otimes \omega_{\cC_{\cW_\Gamma} / \cW_\Gamma }(-\Sigma^\cC_{(1,\rho)}) \right) \oplus (\cL \otimes \cN \otimes \bL_1) \oplus \cN \right).
\end{align} 
Here $R \pi_*^T(-)$ is defined as follows. Since $\pi$ is a $T$--equivariant morphism, this induces a morphism $\bar{\pi} : [\cC_{\cW_\Gamma}/T] \to [\cW_\Gamma / T]$. Thus we have a functor $R \bar{\pi}_* : D^b([\cC_{\cW_\Gamma}/T]) \to D^b([\cW_\Gamma / T])$. Then we define $R \pi_*^T := Lq^* \circ R \bar{\pi}_*$ where $q : \cW_\Gamma \to [\cW_\Gamma / \Gamma]$ is the quotient map. We also note that in \eqref{eq:relobs1} we apply $R \pi_*^T$ to $T$--equivariant bundles, which can be considered as bundles over $[\cW_\Gamma / T]$. 

By \cite[Lemma 2.6]{May01}, we obtain the following diagram:
\begin{align}\label{diag:obsdecoup}
\xymatrix{
A \ar[r] \ar[d] & E_{\cW_{\Gamma} / \what{\cD}_\Gamma } \ar[r] \ar[d] & u^* E_{\cW_{\Gamma'} / \what{\cD}_{\Gamma'}} \ar[r]^-{+1} \ar[d] & \cdots \\
E_{\cW_\Gamma / \cW_{\Gamma'} } \ar[r] \ar[d] & E_{\cW_{\Gamma}} \ar[r] \ar[d] & u^* E_{\cW_{\Gamma'}} \ar[r]^-{+1} \ar[d] & \cdots \\
p^* T_{\what{\cD}_{\Gamma} / \what{\cD}_{\Gamma'} } \ar[r] \ar[d]^-{+1}  & p^* T_{ \what{\cD}_\Gamma} \ar[r] \ar[d]^-{+1} & u^* (p')^* T_{ \what{\cD}_{\Gamma'}} \ar[r]^-{+1} \ar[d]^-{+1} & \cdots \\
\vdots & \vdots & \vdots & &
}
\end{align}
where the rows and columns are distinguished triangles. Note that the relative obstruction theories appears in the above diagrams, and morphisms between them are obtained by taking mapping cones of the bottom-right square. 

Thus we have $A \cong Cone(E_{\cW_\Gamma / \what{\cD}_\Gamma} \to u^* E_{\cW_{\Gamma'} / \what{\cD}_{\Gamma'}})[-1]$. Then $A$ fits into the diagram \eqref{diag:obsdecoup} by the octahedral axiom of the derived category. 
  Then, from \cite[(3.14),(3.17)]{CLLL} and \cite[p.608]{Beh97}, we have 
\begin{align}\label{eq:obscompare1}
A = & R\pi_*^T \left( \left. \left( \cL_1(-\Sigma^{\cC}_\phi)^{\oplus 5} \ \oplus \ \cL_1^{-3} \otimes \bar{\theta}^* \cO_{\PP^2}(-3) \otimes \omega_{\cC_{\cW_\Gamma} / \cW_\Gamma }(-\Sigma^{\cC}_{(1,\rho)}) \right) \right|_x \right) [-1]
\\ 
& \oplus ev^* N_\Delta [-1] \nonumber
\end{align}
where $x : \cW_{\Gamma} \to \cC_{\cW_{\Gamma}}$ is the section into the node. Since the torus $\CC^*$ acts on $\cL_1|_x$ non-trivially, we have 
$$
R\pi_*^T\left( \left. \left( \cL_1(-\Sigma^C_\phi)^{\oplus 5} \right) \right|_x \right) = R\pi_*^T\left( \left. \left(  \cL_1^{-3} \otimes \bar{\theta}^* \cO_{\PP^2}(-3) \otimes \omega_{\cC_{\cW_\Gamma} / \cW_\Gamma }(-\Sigma^\cC_{(1,\rho)}) \right) \right|_x \right) = 0.
$$
Therefore we have 
$$
A \cong ev^* N_\Delta [-1].
$$

\begin{lemm}
The node-splitting morphism $\what{\cD}_\Gamma \to \what{\cD}_{\Gamma'}$ is an isomorphism. Therefore we have $p^*T_{\what{\cD}_\Gamma / \what{\cD}_{\Gamma'}} = 0$.
\end{lemm}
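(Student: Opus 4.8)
The second assertion will follow at once from the first: an isomorphism of Artin stacks has vanishing relative tangent complex, and the pullback of $0$ along $p$ is $0$. So the plan is to show that the node-splitting morphism $\what{\cD}_\Gamma \to \what{\cD}_{\Gamma'}$ is an isomorphism, by writing down an explicit inverse (the gluing morphism) and checking that the two are mutually inverse. Recall that node-splitting normalizes the universal curve along the section cut out by the flag $(e,v_1(e))$, replacing the node $x$ by two markings: $q_\ell$, lying on the $1$-vertex component $\cC_{v_1(e)}$ (which is a stable component of the curves over $\what{\cD}_{\Gamma'}$, since attaching the new leg $\ell$ makes $v_1(e)$ stable), and $q_{\ell'}$, lying over the level-$1$ locus at the new vertex $v_1'$ --- either on a genuine $1$-vertex component or at the $1$-endpoint of the edge component $\cC_{e'}$. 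Given an object $(\cC',\Sigma^{\cC'},\cL_1',\cN',\mu',\nu')$ of $\what{\cD}_{\Gamma'}$, the inverse I would construct glues $q_\ell$ to $q_{\ell'}$ to form a node $x$, glues $\cL_1'$ and $\cN'$ across $x$ by the canonical fibrewise identifications described below, lets $\mu',\nu'$ descend to $T$-invariant sections $\mu,\nu$ on the glued curve, and checks that the resulting decorated graph flattens to $\Gamma$; since each of these operations is functorial in families, this gives a morphism $\what{\cD}_{\Gamma'} \to \what{\cD}_\Gamma$.

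The key point will be that the gluing of $\cL_1'$ and $\cN'$ at $x$ carries no moduli. Both $q_\ell$ and $q_{\ell'}$ lie over the level-$1$ locus of the curve, and there the field relations rigidify the bundles. At $q_\ell$ one has $\mu'|_{\cC_{v_1(e)}}\equiv 1$ and $\nu'|_{\cC_{v_1(e)}}\equiv 1$ by definition of $\what{\cD}_{\Gamma'}$, whence $\cN'|_{\cC_{v_1(e)}}\cong\cO$ and $\cL_1'|_{\cC_{v_1(e)}}\cong\bL_{-1}$, giving distinguished bases of $\cN'|_{q_\ell}$ and $\cL_1'|_{q_\ell}$; at $q_{\ell'}$ the same holds, either by the identical argument on a $1$-vertex component, or --- when $q_{\ell'}$ sits at the $1$-endpoint of $\cC_{e'}$ --- because by the edge analysis of Section~\ref{sect:MSPdescript} the edge $\cC_{e'}$ is of type ($0$--$1$) or ($1$--$\infty$), on which $\nu$ (resp.\ $\mu$) is identically $1$ while $\mu$ (resp.\ $\nu$) vanishes only at the $0$-endpoint (resp.\ $\infty$-endpoint), so that both $\mu'$ and $\nu'$ are nonvanishing at $q_{\ell'}$ and again pin down the fibres. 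I would then glue $\cN'$ and $\cL_1'$ by matching these distinguished bases; equivalently, this is the \emph{unique} gluing for which $\mu$ and $\nu$ extend to sections equal to $1$ at $x$, which is precisely the condition imposed at a $1$-vertex node in the definition of $\what{\cD}_\Gamma$. This uniqueness is what makes the construction well-defined, and it is also where the glueing subtlety noted above --- which appears when $v_1(e)$ is stable, unlike in the quintic case --- is confined: within $\what{\cD}$ it is harmless, because $\what{\cD}$ does not record $\cL_2$, so only the two bundles $\cL_1$ and $\cN$ have to be glued and both are rigidified by $\mu,\nu$; the genuine $\cL_2$--glueing is dealt with separately, through the fibre square~\eqref{eq:decoupfiber1}.

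To conclude, I would verify that gluing and node-splitting are mutually inverse: splitting the curve that was glued along $x$ recovers $(\cC',\dots)$ since the distinguished bases used in the gluing restrict to the bases on the two branches, and conversely gluing the curve that was split recovers the original object since the node $x$ of any object of $\what{\cD}_\Gamma$ lies on the level-$1$ component $\cC_{v_1(e)}$, where $\mu\equiv\nu\equiv 1$, so the gluing bases agree with the values of $\mu,\nu$ at $x$. Since the flattened graphs and the $T$--equivariant structures match on both sides, this gives $\what{\cD}_\Gamma\cong\what{\cD}_{\Gamma'}$, hence $T_{\what{\cD}_\Gamma/\what{\cD}_{\Gamma'}}=0$ and therefore $p^*T_{\what{\cD}_\Gamma/\what{\cD}_{\Gamma'}}=0$.

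The main obstacle I anticipate is the canonicity of the gluing, i.e.\ checking that the defining conditions of $\what{\cD}_{\Gamma'}$ genuinely force $\mu'$ and $\nu'$ to be nonvanishing at both $q_\ell$ and $q_{\ell'}$ and determine their values there. On $\cC_{v_1(e)}$ this is immediate, but when $v_1(e)$ is stable one must also control the edge-component branch $\cC_{e'}$, and there the argument should run parallel to \cite{CL20van}, using the explicit vanishing loci of $\mu$ and $\nu$ on ($0$--$1$)-- and ($1$--$\infty$)--edges recorded in Section~\ref{sect:MSPdescript}.
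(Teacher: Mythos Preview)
Your proposal is correct and captures exactly the key idea of the paper's proof: the only possible ambiguity in passing from $\what{\cD}_{\Gamma'}$ to $\what{\cD}_\Gamma$ is the gluing data for $\cL_1$ and $\cN$ at the node, and this ambiguity is killed because the sections $\mu,\nu$ are nonvanishing at both markings $q_\ell,q_{\ell'}$ and hence single out a canonical gluing. The paper packages this slightly differently---it cites from \cite{CLLL} that the un-hatted morphism $\cD_\Gamma\to\cD_{\Gamma'}$ is a $(\CC^*)^2$-torsor (the two factors being exactly the gluing choices for $\cN$ and $\cL_1$), and then observes that the extra data $(\mu,\nu)$ carried by $\what{\cD}$ trivializes this torsor---whereas you construct the inverse directly; but the content is the same.
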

\begin{proof}
First consider the case when $\Gamma'$ is connected. Then by \cite[Proof of Lemma 3.9]{CLLL} the morphism $\cD_\Gamma \to \cD_{\Gamma'}$ is a $(\CC^*)^2$-torsor. The first and the second factor $\CC^*$ of $\CC^*$ are gluing data of $\cN|_{\cC_{e'}}$ and $\cN|_{\cC_{v_1'}}$ at the markings $x(\ell),x(\ell')$, and the gluing data of $\cL_1|_{\cC_{e'}}$ and $\cL_1|_{\cC_{v_1'}}$ at the markings $x(\ell),x(\ell')$.
But $\what{\cD}_{\Gamma'}$ has the data of sections $\mu,\nu$, such that both are nonzero at the markings $x(\ell),x(\ell')$. Therefore, the sections give us a unique way of gluing. Therefore there is no $(\CC^*)$--gerbe structure here.
When $\Gamma'$ has many connected components, we can prove this in a similar manner.
\end{proof}
Thus, the distinguished triangle of the leftmost column of 
\eqref{diag:obsdecoup} gives us an isomorphism
\[
E_{\cW_{\Gamma} / \cW_{\Gamma'}} \simeq ev^* N_\Delta [-1].
\]

  Therefore we obtain the distinguished triangle \eqref{eq:distvpull} we wanted, which proves $\Delta^![\cW_{\Gamma'}]\virt = [\cW_\Gamma]\virt$. Next we consider cosection localized virtual cycles.
  Recall that the cosection $h^1(E_{\cW}\dual) \to \cO_{\cW}$ is induced from the potential $F(\bx, \by)\cdot p$.
As in \cite[Proof of Lemma 3.9]{CLLL}, we can check two cosections $\sigma_\Gamma : h^1(\EE_{\cW_\Gamma}\dual) \to \cO_{\cW_\Gamma}$ and $\sigma_{\Gamma'} : h^1(\EE_{\cW_\Gamma}\dual) \to \cO_{\cW_\Gamma}$ are consistent. Moreover the following diagram commutes:
\[
\xymatrix{0 \ar[r] & h^1(E_{\cW_{\Gamma}}\dual) \ar[rd]_-{\sigma_{\Gamma}} \ar[r] & u^*h^1(E_{\cW_{\Gamma'}}\dual) \ar[r] \ar[d]_-{\sigma_{\Gamma}} & ev^*N_{\Delta} \ar[ld]^-{0} \ar[r] & 0  \\
& & \cO_{\cW_\Gamma} & & }
\]
Therefore, by using virtual pull-backs for cosection localized virtual cycles \cite{Man12, CKL}, we also obtain the equality of localized virtual cycles 
\begin{align}\label{eq:virtpb1}
\Delta^![\cW_{\Gamma'}]\virt\loc = [\cW_\Gamma]\virt\loc.
\end{align}
Therefore it is enough to show $[\cW_{\Gamma'}]\virt\loc = 0$ in order to show $[\cW_\Gamma]\virt\loc = 0$.

\medskip

Now we only need to consider ``decoupled" graphs to show vanishing of the localized virtual cycle $[\cW_\Gamma]\virt\loc$. After decoupling, we proceed with the process called \textbf{`trimming'}, that removes all edges in $E_{01}\cup E_{1\infty}$. For $e \in E_{01}$ such that its incident vertex $v_0(e) \in V_0$ is stable, or $\val(v_0(e)) \geq 2$ we remove this edge $e$ and the vertex $v_1(e) \in V_1$ incident to $e$, and all legs attached to $v_1(e)$, and then we attach a new $(1,\rho)$--leg to $v_0(e)$. Similarly, for $e \in E_{1\infty}$ such that its incident vertex $v_\infty(e) \in V_\infty$ is stable, or $\val(v_\infty(e)) \geq 2$ we remove this edge $e$ and the vertex $v_1(e) \in V_1$ incident to $e$, and all legs attached to $v_1(e)$, and then we attach a new $(1,\phi)$--leg to $v_\infty(e)$. 

  Let $\Gamma'$ be a graph obtained from $\Gamma$ by decoupling all non-leaf edges $e \in E_{01} \cup E_{1\infty}$. Also we consider $\Gamma''$ which is obtained from $\Gamma'$ by trimming all edges in $E_{01}(\Gamma') \cup E_{1\infty}(\Gamma')$.
  Then there is a morphism $q : \cW_{\Gamma'} \to \cW_{\Gamma''}$. By using analogous arguments in \cite{CLLL}, we can show that it is a $G$--gerbe, for a finite group $G$.  
  
\begin{lemm}
$q : \cW_{\Gamma'} \to \cW_{\Gamma''}$ is a $G$--gerbe for a finite group $G$.
\end{lemm}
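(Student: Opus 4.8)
The plan is to adapt \cite[Proof of Lemma 3.9]{CLLL} to the present setting, the new features being the $T$--invariance of the edge data and the possible orbifold structure on $(1\text{--}\infty)$--edges. First I would reduce, by induction on the number of trimmed edges, to the case in which $\Gamma''$ is obtained from $\Gamma'$ by trimming a single edge; I spell out $e\in E_{01}(\Gamma')$, the case $e\in E_{1\infty}(\Gamma')$ being parallel. So $\Gamma'$ differs from $\Gamma''$ by an edge $e$ joining a $0$--vertex $v_0$ (stable or with $\val(v_0)\ge2$) to an unstable $1$--vertex of valency $2$ carrying a single $(1,\phi)$--leg $\ell'$, while $\Gamma''$ carries instead a $(1,\rho)$--leg at $v_0$; the morphism $q$ forgets $\cC_e$, the $1$--vertex and $\ell'$, remembering the node $\cC_e\cap\cC_{v_0}$ as the new $(1,\rho)$--marking.

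Next I would rigidify the restriction of a field $\xi'\in\cW_{\Gamma'}\subset\cW^T$ to the trimmed piece. By Section \ref{sect:MSPdescript}(d) (or Lemma \ref{lemm2} for edges arising from flattening) one has $\cC_e\cong\PP^1$, $\cL_2|_{\cC_e}\cong\cN|_{\cC_e}\cong\cO_{\cC_e}$, $\cL_1|_{\cC_e}\cong\cO_{\PP^1}(d_e)$ with $d_e>0$, $\rho\equiv0$, $\nu\equiv1$; since $\CC^*$ acts nontrivially on $\cC_e$, Remark \ref{rema:invsec} forces $\phi_1,\phi_2,\phi_3$ to be proportional to the (up to scalar unique) $T$--invariant section of $\cL_1|_{\cC_e}$, which vanishes only at $p_1$ (the $(1,\phi)$--marking) to order $d_e$, and forces $\mu$ to be the unique $T$--invariant section of $\cL_1\otimes\cN\otimes\bL_1$, vanishing only at $p_0:=\cC_e\cap\cC_{v_0}$ to order $d_e$. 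Hence $(\mu,\phi,\theta)$ exhibits $\cC_e$ as a degree $d_e$ cyclic cover of a line $L_{a;b}\subset\PP^3\times\PP^2$, totally ramified over the two $\CC^*$--fixed points of $L$. Given $\xi'':=q(\xi')$, the remaining data is recovered from $\xi''$: since $p_0$ is now a $(1,\rho)$--marking on the $0$--vertex curve, $(\phi_1,\phi_2,\phi_3)(p_0)\neq0$ (Section \ref{sect:MSPdescript}(a)), so $a=[\phi(p_0)]$ and $b=\bar\theta(p_0)$, hence the line $L_{a;b}$, are determined by $\xi''$; and the gluings of $\cL_1,\cL_2,\cN$ across $p_0$ are pinned down by matching the nonvanishing values $\phi(p_0),\theta(p_0),\nu(p_0)$ on the two sides, so, exactly as in the $\CC^*$--gerbe killing step of the decoupling argument and in \cite[Proof of Lemma 3.9]{CLLL}, no gerbe is introduced at the node.

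Finally I would identify the residual moduli of the trimmed piece. A degree $d_e$ cover $\PP^1\to L\cong\PP^1$ totally ramified over two prescribed points $b_0,b_1$ with $p_0\mapsto b_0$ and $p_1\mapsto b_1$ is unique up to isomorphism; in normalised coordinates it is $[x:y]\mapsto[x^{d_e}:y^{d_e}]$, and its automorphisms fixing $p_0,p_1$ are $[x:y]\mapsto[\zeta x:y]$ with $\zeta^{d_e}=1$ (if $t^{d_e}\neq1$, the automorphism $[x:y]\mapsto[tx:y]$ rescales $\mu$ by $t^{d_e}$, and absorbing this via an automorphism of $\cL_1$ would alter $\phi$, a contradiction), while $\mu_{d_e}$ fixes $\mu,\phi,\theta,\nu$ through the canonical identification $[\zeta x:y]^*\cO(d_e)\cong\cO(d_e)$. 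Therefore $q^{-1}(\xi'')\cong B\mu_{d_e}$, and since the re-attachment of $\cC_e$ is canonical given $\xi''$ up to this $\mu_{d_e}$--action, $q$ admits sections \'etale--locally on $\cW_{\Gamma''}$ and is a $\mu_{d_e}$--gerbe. Running over all trimmed edges (adjoining a $\mu_3$ factor for each $(1\text{--}\infty)$--edge with $\cC_e\cong\PP(1,3)$), $q$ is a gerbe banded by the finite group $G:=\prod_e\mu_{d_e}$ (times the $\mu_3$'s), which is globally constant because the degree decoration $\bold d$ is fixed on $\cW_{\Gamma'}$. The main obstacle is this node--gluing analysis: one must check that forgetting the trimmed piece together with its $T$--equivariant line bundle data, using the linearizations of Section \ref{sect:linearization}, pins down all gluing parameters via the surviving nonvanishing sections, so that residual automorphisms reduce exactly to $G$ and $q$ is a genuine gerbe rather than merely fibrewise $BG$; granting this, the remainder is a transcription of \cite{CLLL}.
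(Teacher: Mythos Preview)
Your proposal is correct and follows essentially the same route as the paper, which simply defers to \cite[Proposition 2.32]{CLLL} (not Lemma 3.9 --- that is the decoupling torsor lemma you invoked earlier) and then observes that the only new ingredient relative to the quintic case, namely the data $(\cL_2,\theta)$, is trivial on each trimmed edge because $\bar\theta|_{\cC_e}$ is constant and its value is pinned down at the connecting node. Your write-up spells out in detail the rigidification of the edge data and the $\mu_{d_e}$--automorphism count that the paper leaves packaged inside the citation; apart from the citation slip and the speculative $\mu_3$ factor for $\PP(1,3)$ edges (which you should verify against the orbifold analysis in \cite{CLLL}), the argument is the same.
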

\begin{proof}
This is straightforward from the proofs of \cite[Proposition 2.32]{CLLL}. The only difference is that we have the additional data $(\cL_2,\theta)$. But, for an edge $e$ we trim from $\Gamma'$, the induced morphism $\bar{\theta} : \cC_e \to \PP^2$ is a constant map by arguments in Section \ref{sect:MSPdescript}, and its value is determined by its value at the connecting node, $\cC_e \cap \cC_\infty$. Therefore, the data $(\cL_2,\theta)$ is fixed over $\cC_e$ so that we can omit it. 
\end{proof}


Moreover, as a direct analogue of \cite[Lemma 3.11]{CLLL}, we have
\begin{align}\label{eq:virtpb2}
q^*[\cW_{\Gamma'}]\virt\loc = [\cW_{\Gamma''}]\virt\loc.
\end{align}

Combining \eqref{eq:virtpb1} and \eqref{eq:virtpb2}, it is enough to show that $[\cW_{\Gamma''}]\virt\loc = 0$ in order to show that $[\cW_{\Gamma}]\virt\loc = 0$. Then, there are no edges between $V_1(\Gamma'')$ and $V_0(\Gamma'') \cup V_\infty(\Gamma'')$. Let $\Gamma_{0\infty}'' \subset \Gamma''$ be a subgraph obtained by removing all $V_1(\Gamma'')$ vertices, and let $\Gamma_1''$ be a subgraph obtained by removing all $V_0(\Gamma'') \cup V_{\infty}(\Gamma'')$. Then the vertex, edge, leg sets of $\Gamma''$ are the disjoint unions of the corresponding sets of the two subgraphs $\Gamma_{0\infty}''$ and $\Gamma_{1}''$ respectively.

Since the original graph $\Gamma$ is irregular, $\Gamma_{0\infty}''$ is nonempty. Obviously we have $\cW_{\Gamma''} = \cW_{\Gamma_{0\infty}''} \times \cW_{\Gamma_{1}''}$ and moreover we have
\begin{align}\label{eq:virtloc3}
[\cW_{\Gamma''}]\virt\loc = [\cW_{\Gamma_{0\infty}''}]\virt\loc \times [\cW_{\Gamma_1''}]\virt\loc \, .
\end{align}

Therefore, to show that $[\cW_{\Gamma''}]\virt\loc = 0$, it is enough to show that $[\cW_{\Gamma_{0\infty}''}]\virt\loc = 0$. Therefore, we may assume that $\Gamma$ has no $V_1(\Gamma)$ vertices.

\subsection{Forgetting $(1,\rho)$ and (m=1) markings} \label{sect:forget} Here we will assume that $V_1(\Gamma) = \emptyset$. For a leg $s \in S^1 \cup S^{(1,\rho)}$, we consider a graph $\Gamma'$ obtained from $\Gamma$ by forgetting $s$ and stabilizing(if necessary).
Then, as a straightforward analogue of the same process in \cite[pp. 7369--7371]{CL20van}, and by \cite[Theorem 4.5]{CLL15} we can check that there is a flat forgetful morphism $f : \cW_\Gamma \to \cW_{\Gamma'}$ and we have $f^*[\cW_{\Gamma'}]\virt\loc = [\cW_{\Gamma}]\virt\loc$. Therefore, to show that $[\cW_{\Gamma}]\virt\loc = 0$, it is enough to show this for the case that $\Gamma$ does not have legs $s \in S^1 \cup S^{(1,\rho)}$. 

\subsection{Reduction to no string case}\label{sect:redtonost} Now we show why we can reduce the situation to the case without strings. Let $e$ be a string of $\Gamma$ with two vertices $v_1, v_2$. Let $\bar{e}$ be a graph with one edge $e \in E_{0 \infty}$ and two vertices $v_\infty(e), v_0(e)$ in $V_\infty$ and $V_0$ respectively, and with a leg $s$ attached at $v_\infty(e)$ with monodromy type $m(s)=0$ a \textbf{broad marking}.(The leg $s$ is not necessarily of the type $(1,\rho)$ or $(1,\phi)$.) 

\begin{rema}\label{rema:string}
Since $v_0(e)$ is an unstable vertex, such that $e$ is the only edge connected to $v_0(e)$, at most one leg can be attached to $v_0(e)$. Although we add that leg to the graph $\bar{e}$, we can observe that the moduli $\cW_{\bar{e}}$ does not change.
\end{rema}

Note that $v_\infty(e)$ and $v_0(e)$ are unstable vertices of the graph $\bar{e}$. Let $\cW_{\bar{e}}$ be the moduli space of MSP fields $\xi$ such that $\Gamma_\xi = \bar{e}$. It is essential to show that the dimension of $\cW_{\bar{e}}$ is equal to its virtual dimension, and therefore has lci(locally complete intersection) singularity. For that, we compute the virtual dimension. We have:
\begin{align*}
vdim \cW_{\bar{e}} = & dim D_{\bar{e}} + \chi_T( \cL_1 \otimes \cN \otimes \bL_1) + \chi_T( \cN) + 3\chi_T(\cL_1(-\sigma^{\cC}_{(1,\phi)})) \\
& + 3 \chi_T(\cL_2) + \chi_T( \cL_1^{-3}\otimes \cL_2^{-3} \otimes \omega_{\cC}^{\log} (-\Sigma^{\cC}_{(1,\rho)})).
\end{align*}

\noindent We can check $\dim D_{\bar{e}} = -2 + -2 = -4$. Also, similar to the computation in Section \ref{sect:nostring}, we have
\[
\chi_T( \cL_1 \otimes \cN \otimes \bL_1) + \xi_T( \cN) = \chi( \cL_1 \otimes \cN \otimes \bL_1) + \chi( \cN) = 1 + 1 = 2
\]
Also, from a direct calculation we can check that 
$$
\chi_T( \cL_1(-\sigma^{\cC}_{(1,\phi)})) = \chi_T( \cL_2) = 1, \ \  \chi_T( \cL_1^{-3}\otimes \cL_2^{-3} \otimes \omega_{\cC}^{\log} (-\Sigma^{\cC}_{(1,\rho)})) = 0.
$$ 
Note that for the last equality, one should take into account that we have a broad marking at the vertex $v_\infty(e)$. Thus we have
\[ 
3\chi_T( \cL_1(-\sigma^{\cC}_{(1,\phi)})) + 3 \chi_T(\cL_2) + \chi_T(\cL_1^{-3}\otimes \cL_2^{-3} \otimes \omega_{\cC}^{\log} (-\Sigma^{\cC}_{(1,\rho)})) = 6.
\]
Hence we have
\[
\vdim \cW_{\bar{e}} = -4 + 2 + 6 = 4.
\]

Next we compute the dimension of $\cW_{\bar{e}}$. We will construct a bijective morphism from the coarse moduli space of $\cW_{\bar{e}}$, namely $|\cW_{\bar{e}}|$ to $\PP^2 \times \PP^2$. 
We first show that $\cC_e$ is a union of two rational curves $\cC_+$ and $\cC_-$ where $\cC_+$ meets $\cC_\infty$ and $\cC_-$ meets $\cC_0$. If $\cC_e$ is irreducible, then by Lemma \ref{lemm2}, we have $\cC_e \cong \PP^1$, $\cL_1 \cong \omega_{\cC_e}^{\mathrm{log}} \cong \cO_{\PP^1}$. But since there is only one marked points on $\cC_e$(corresponding to the unique leg), we have $\omega_{\cC_e}^{\mathrm{log}} \cong \cO_{\PP^1}(-1)$, which leads to a contradiction. 

Therefore the curve $\cC_e$ should be decomposed into $\cC_e = \cC_+ \cup \cC_-$. 
  Let $q_\infty := \cC_+ \cap \cC_0$ and $q_0 := \cC_- \cap \cC_0$, $x := \cC_+ \cap \cC_-$. Since the induced $\CC^*$--action on $\cC_+, \cC_-$ are nontrivial and since $\CC^*$--action on $\theta_1,\theta_2,\theta_3$ is trivial, the morphism $\cC_e \to \PP^2$ induced from $\theta$ should be a constant map. This gives us a map $g_1 : |\cW_{\bar{e}}| \to \PP^2$. On the other hand, $q_0 \in \cC_0$ and by the arguments in Section \ref{sect:MSPdescript} (a), $\phi$ is nonvanishing over the entire $\cC_0$. Hence $\phi|_{q_0} \in \cL_1^{\oplus 3}|_{q_0} \setminus \set{0}$. Therefore, the ratio $[\phi_1(q_0):\phi_2(q_0):\phi_3(q_0)]$ is an element of $\PP^2$. Therefore it defines another map $g_2 : |\cW_{\bar{e}}| \to \PP^2$. Therefore, we have a morphism
\[
(g_1,g_2) : |\cW_{\bar{e}}| \to \PP^2 \times \PP^2.
\]

\begin{lemm} \label{lemm:bij}
The morphism $(g_1,g_2)$ is bijective.
\end{lemm}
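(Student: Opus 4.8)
The plan is to prove that $(g_1,g_2)$ is injective and that it is surjective, treating the two properties separately, and in both cases exploiting the rigidity of the combinatorial data attached to $\cW_{\bar e}$. The starting point is the structural description already obtained above: for every $\xi\in\cW_{\bar e}$ the domain is $\cC_\xi=\cC_+\cup\cC_-$, where in $\Gamma_\xi$ the component $\cC_-\cong\PP^1$ is the edge of type $E_{01}$ meeting $\cC_0$ at $q_0$, $\cC_+$ is the edge of type $E_{1\infty}$ carrying the broad leg $s$, and $x=\cC_+\cap\cC_-$ is the $T$-balanced level-$1$ node whose flattening produces $\bar e$; moreover $\theta$ is constant with value $b:=g_1(\xi)\in\PP^2$, while $(\mu,\phi)|_{\cC_-}$ sweeps out the line $\{[t:sa_1:sa_2:sa_3]\mid[t:s]\in\PP^1\}\subset\PP^3$ with $[a_1:a_2:a_3]=[\phi(q_0)]=g_2(\xi)$. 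In other words $(g_1,g_2)$ records exactly the line $L_{a;b}$ from the description of $(0,1)$-edges in Section \ref{sect:MSPdescript}, so the task reduces to showing that $L_{a;b}$ determines $\xi$ up to isomorphism and that every such line is realized.

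For injectivity I would fix $(b,a)$ and reconstruct $\xi$ component by component. On $\cC_-$ the section $\mu$ vanishes only at $q_0$ and the proportional sections $\phi_1,\phi_2,\phi_3$, of common ratio $a$, vanish only at $x$, so the morphism $\cC_-\to L_{a;b}$ given by $(\mu,\phi)$ is a branched cover of $\PP^1$, of the edge degree $d$, totally ramified over its two special points; such a cover is unique up to isomorphism, which pins down $\cC_-\cong\PP^1$ and $\cL_1|_{\cC_-}\cong\cO(d)$ together with $\mu$ and $\phi$, while $\nu\equiv 1$ forces $\cN|_{\cC_-}\cong\cO$, the constancy $\theta\equiv b$ forces $\cL_2|_{\cC_-}\cong\cO$ with $\theta$ equal to the constant $b$, and $\rho|_{\cC_-}\equiv 0$. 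On $\cC_+$ one has $\mu\equiv 1$, $\phi\equiv 0$, $\cL_2\cong\cO$ with $\theta\equiv b$ by continuity at $x$; the bundles $\cL_1,\cN$ are then determined by $\cL_2\cong\cO$, the relations $\mu\equiv 1$, $\rho\equiv 1$ and the pointed curve, and the remaining fields $\rho,\nu$ define a branched cover $\cC_+\to\PP(1,3)$ of the degree dictated by $\bold{d}(\bar e)$, totally ramified over the node $x$ and the marking $s$, hence again unique up to isomorphism. Finally, the gluing at $x$ is not free: since $\mu$, $\nu$ and $\theta$ are all nonzero at $x$, demanding that they glue forces the identifications of $\cL_1,\cL_2,\cN$ across $x$, exactly as in the gluing analysis following \eqref{eq:decoupfiber1}. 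Thus $\xi$ is determined, up to a unique isomorphism, by $(b,a)$, and $(g_1,g_2)$ is injective.

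For surjectivity, given $(P,Q)\in\PP^2\times\PP^2$ I would build $\xi$ directly: on $\cC_-=\PP^1$ take $\cL_1=\cO(d)$, $\mu$ a section vanishing to order $d$ at $q_0$, $\phi_i=Q_if$ with $f$ a section of $\cL_1$ vanishing to order $d$ at $x$, $\cN=\cO$, $\nu\equiv 1$, $\cL_2=\cO$, $\theta\equiv(P_1,P_2,P_3)$, $\rho\equiv 0$; take $\cC_+$ the standard $E_{1\infty}$-edge of the prescribed degree carrying the broad marking $s$ with $\theta\equiv(P_1,P_2,P_3)$; and glue the two components at $x$ using $\mu,\nu,\theta$. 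One then checks the stability conditions of Definition \ref{cond:stab}: the pairs $(\mu,\nu)$, $(\phi,\mu)$, $\theta$, $(\rho,\nu)$ are nowhere vanishing because the zero loci of $\mu,\phi,\rho,\nu$ are mutually disjoint by construction, and positivity of $\cL_1\otimes\cL_2\otimes\cN^{\otimes 2}\otimes(\omega_\cC^{\log})^{\otimes(1/3)^+}$ holds on each component as in \eqref{eq:stab0}--\eqref{eq:stabinf}; moreover $\Gamma_\xi$ flattens to $\bar e$ since $v_1=x$ is $T$-balanced with the two edge degrees summing to zero. As $\theta\equiv(P_1,P_2,P_3)$ we get $g_1(\xi)=P$, and as $\phi(q_0)=(Q_1,Q_2,Q_3)\,f(q_0)$ with $f(q_0)\neq 0$ we get $g_2(\xi)=Q$; hence $(g_1,g_2)$ is onto.

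The hard part will be the rigidity input --- that a branched cover of $\PP^1$, respectively of $\PP(1,3)$, totally ramified over two marked points is unique up to isomorphism --- applied to both $\cC_-$ and $\cC_+$: one must keep track of the possible orbifold structure at $x$ and $s$ together with the monodromy decorations, and verify that no modulus hides in the gluing at the $T$-balanced node. Symmetrically, in the surjectivity step the delicate point is to choose the degrees of the two standard edges so that the flattened graph is exactly $\bar e$ with the prescribed $\bold{d}(\bar e)$ and $\gamma(\bar e)$.
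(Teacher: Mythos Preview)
Your proposal is correct and follows essentially the same approach as the paper: both prove surjectivity by an explicit construction of an MSP field over $\cC_+\cup\cC_-$ realizing a prescribed pair in $\PP^2\times\PP^2$, and prove injectivity by reconstructing the field component-by-component using that a totally ramified branched cover of $\PP^1$ (resp.\ $\PP(1,3)$) of given degree is unique up to isomorphism, with the gluing at the node $x$ pinned down by the nonvanishing of $\mu,\nu,\theta$ there. The only cosmetic differences are that you treat injectivity before surjectivity, and you phrase the node-gluing rigidity as ``the nonzero sections force the identifications of $\cL_1,\cL_2,\cN$ across $x$'' whereas the paper phrases it as ``different gluings give isomorphic bundles, and the scalar ambiguity in the sections is absorbed by $\Aut(\cL_i)$''; these are equivalent.
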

\begin{proof}
  First we show that it is surjective. Consider an element $([a_1 : a_2 : a_3], [b_1 : b_2 : b_3]) \in \PP^2 \times \PP^2$.
  Let $(\cL_2, \theta_1, \theta_2,\theta_3) = (\cO_{\cC_e}, a_1, a_2, a_3)$. Also let $\cL_1|_{\cC_-}  = \cO_{\PP^1}(d)$, $d>0$, using the isomorphism $\cC_- \cong \PP^1$. And choose the data:
\begin{align}\label{eq:const1}
( \phi_1|_{\cC_-}, \phi_2|_{\cC_-}, \phi_3|_{\cC_-}, \mu|_{\cC_-} ) = (b_1 x^{d}, b_2 x^{d}, b_3 x^{d}, y^{d})\
\end{align} 
  which induces the morphism $f_{\phi} : \PP^1 \cong \cC_- \to \PP^3$, a degree $d_\infty$ multiple cover of a line connecting $[1:0:0:0]$ and $[0:b_1:b_2:b_3]$ with exactly two ramification points $q_-=[1:0], x = [0:1]$ over $[1:0:0:0],[0:a_1:a_2:a_3]$. 
  Since the edge $e$ is obtained by flattening, we have $\cL_1|_{\cC_+} = \cO(-d)$ by Lemma \ref{lemm1}. Let $\cL_1$ over $\cC_e$ be a gluing of $\cL_1|_{\cC_+}$ and $\cL_1|_{\cC_-}$. Set $\phi_i|_{\cC_+} = 0$, then $\phi_i|_{\cC_+}$ and $\phi_i|_{\cC_-}$ glue to the sections $\phi_i$ of $\cL_1$. 

On the other hand, we let $\cN|_{\cC_-} = \cO_{\cC_-}$ and $\nu|_{\cC_-} = 1$. 
Let $\cN|_{\cC_+} = \cO_{\PP^1}(d_\infty)$ and let $\cN$ be a gluing of $\cN|_{\cC_-}$ and $\cN|_{\cC_+}$. We define $\nu|_{\cC_+} \in \cO_{\cC_+}(d_\infty)$ and $\rho|_{\cC_+} \in \cO_{\cC_+}(3 d_\infty)$
as $((\rho|_{\cC_+},\nu|_{\cC_+})) = (x^{3 d_\infty}, y^{d_\infty})$, which induces a branched covering to $\PP[1:3]$ with two branched points $[0:1],[1:0] \in \PP^1$ over $[0:1], [1:0] \in \PP[1:3]$. Then $\nu|_{\cC_-}, \nu|_{\cC_+}$ glue to the section $\nu$ of $\cN$.
Furthermore, we let $\rho|_{\cC_-} = 0$. Then $\rho|_{\cC_+},\rho|_{\cC_-}$ glue to the section $\rho \in \rH^0(\cL_1^{-3}\otimes \cL_2^{-3} \otimes \omega_{\cC}^{\log} )$. 

  From the above construction, we have $\cL_1 \otimes \cN|_{\cC_+} \cong \cO_{\cC_+}$ and let $\mu|_{\cC_+} = 1$. Then $\mu|_{\cC_-}$ defined in \eqref{eq:const1} and $\mu|_{\cC_+}$ glue to the element $\mu \in \rH^0(\cC_e, \cL_1 \otimes \cN)$.
  Therefore this choice $(\cC_e,\cL_1,\cL_2,\cN,\phi,\theta,\mu,\nu,\rho)$ defines an element $\xi \in \cW_{\bar{e}}$ such that $(g_1,g_2)(\xi_{a,b} )= ([a_1:a_2:a_3],[b_1:b_2:b_3])$. Hence $(g_1,g_2)$ is surjective.

\smallskip

To show injectivity, it is enough to show that $(g_1,g_2)^{-1}( \set{ ([a_1:a_2:a_3],[b_1:b_2:b_3]) } )$ is a point. Consider an element $\xi' = (\cC_e',\cL_1',\cL_2',\cN',\phi',\theta',\mu',\nu',\rho') \in (g_1,g_2)^{-1}([a_1:a_2:a_3],[b_1:b_2:b_3])$. 
We can easily observe $\cL_1' \cong \cL_1$, $\cL_2' \cong \cL_2$, $\cN' \cong \cN$, because $\cC_e = \cC_+ \cup \cC_-$ and the different gluing of line bundles at the node gives us the same line bundles up to isomorphisms. 

  From the description of ($0$--$1$)--edge and ($1$--$\infty$)--edge in Section \ref{sect:decompdomain}, $(\mu|_{\cC_-},\phi|_{\cC_-})$ defines a degree $d_\infty$ multiple cover of a line connecting $[1:0:0:0]$ and $[0:a_1:a_2:a_3]$, which has exactly two ramification points $q_-=[1:0], x = [0:1]$ over $[1:0:0:0],[0:a_1:a_2:a_3]$, and moreover, $(\rho|_{\cC_+}, \nu|_{\cC_+})$ induces a branched covering to $\PP[1:3]$ with two branch points $[0:1],[1:0] \in \PP^1$ over $[0:1], [1:0] \in \PP[1:3]$.
  From this, we have $\phi|_{\cC_\pm}$ and $\phi'|_{\cC_\pm}$ differ by scalar multiple. But this can be identified via the automorphism $\Aut(\cL_1|_{\cC_\pm})$. Therefore, $\phi,\phi'$ can be identified via $\Aut(\cL_1)$. Similarly, we can observe for other fields, $(\theta, \rho, \mu, \nu)$ and $(\theta', \rho', \mu', \nu')$ can be identified via automorphisms of line bundles, $\Aut(\cL_2), \Aut(\cN)$. Therefore, we have $\xi = \xi' \in \cW_{\bar{e}}$. Therefore $(g_1,g_2)$ is injective.
\end{proof}

Thus, $(g_1,g_2)$ is a bijective morphism and we have $\dim( \cW_{\bar{e}}) = 4$. Therefore, $\dim \cW_{\bar{e}} = \vdim \cW_{\bar{e}}$, and $\cW_{\bar{e}}$ has local complete intersection singularity since its perfect obstruction theory is (\'etale)locally comes from Kuranishi model \cite[p. 1037]{KL13}.
\begin{lemm}\label{lemm:lcising}
The moduli space of $\bar{e}$--framed MSP fields, $\cW_{\bar{e}}$ has locally complete intersection singularities.
\end{lemm}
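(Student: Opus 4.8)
The plan is to deduce the local complete intersection property from the equality of the virtual and the actual dimension of $\cW_{\bar e}$, via the Kuranishi-model description of a perfect obstruction theory. Note first that $\cW_{\bar e}$ carries a perfect obstruction theory, namely the fixed part of the restriction to $\cW_{\bar e}$ of the obstruction theory of $\cW$ from Section~\ref{sect:equivariant1}, and the Euler-characteristic bookkeeping carried out above computes the rank of its virtual tangent complex to be $\vdim \cW_{\bar e} = -4 + 2 + 6 = 4$. By \cite[p.~1037]{KL13}, such an obstruction theory realises $\cW_{\bar e}$, \'etale-locally around any point, as the zero locus $Z(s)$ of a section $s$ of a vector bundle $E$ of some rank $r$ over a smooth scheme $U$ of some dimension $n$, with $n - r = 4$. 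It therefore suffices to show that in every such local chart the section $s$ is Koszul-regular, i.e.\ that its $r$ components form a regular sequence; this exhibits $\cW_{\bar e}$ as a local complete intersection.

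For this I would argue by dimensions. On the one hand, Krull's principal ideal theorem applied $r$ times gives $\dim_x Z(s) \ge n - r = 4$ at every point $x$ of the chart. On the other hand, by Lemma~\ref{lemm:bij} the coarse moduli space of $\cW_{\bar e}$ admits a bijective --- hence quasi-finite and dominant --- morphism to the irreducible fourfold $\PP^2 \times \PP^2$, so that $\dim \cW_{\bar e} = 4$ and hence $\dim_x Z(s) \le 4$ for all $x$. Combining the two bounds forces $\dim_x Z(s) = n - r$ at each $x$, which is precisely the condition that $s$ be a regular section; therefore $\cW_{\bar e}$ has local complete intersection singularities.

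The only delicate point is bookkeeping: one has to make sure that the number computed as $-4+2+6$ is genuinely the virtual dimension of the perfect obstruction theory intrinsic to $\cW_{\bar e}$ (with no hidden shift coming from automorphisms of the framing), and that the Kuranishi-chart statement of \cite{KL13} applies verbatim in this orbifold, $T$-equivariant context. Once these are in place, the dimension comparison above does all the work.
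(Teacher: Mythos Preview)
Your argument is correct and is essentially the same as the paper's: compute $\vdim\cW_{\bar e}=4$, use Lemma~\ref{lemm:bij} to conclude $\dim\cW_{\bar e}=4$, and then invoke the Kuranishi-model description from \cite[p.~1037]{KL13} to deduce the lci property from the equality $\dim=\vdim$. The paper states this in a single sentence just before the lemma, leaving the regular-sequence/dimension-count step implicit, whereas you spell it out.
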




  Let $\Gamma'$ be the graph obtained from $\Gamma$ by removing the edge $e$ and the vertex $v_0(e)$, and legs connected to $v_0(e)$ and attach a new leg $\ell$ decorated by $(1,\phi)$ to the vertex $v_\infty(e)$. 
By Remark \ref{rema:string}, regardless of $v_0(e)$ having legs attached to it or not, there is a morphism $\cW_{\Gamma} \to \cW_{\bar{e}}$ given by the restrictions of MSP fields $\xi = (\cC, \Sigma^{\cC}, \dots)$ over $\cC_e \subset \cC$. 
  Let $\cW_{\bar{e}}^\mu$ be the reduced substack of $\cW_{\bar{e}}$ and let $\cW_{\Gamma}^\mu := \cW_{\Gamma} \times_{\cW_{\bar{e}}} \cW_{\bar{e}}^\mu$. 
Let $\kappa : \cW^\mu_{\Gamma} \to \cW_{\Gamma'}$ be the morphism, which is obtained by restricting MSP fields to its subcurve. This morphism is well-defined for each MSP field $\xi \in \cW^\mu_{\Gamma}$ because its value of $\phi$ at the node $\cC_e \cap \cC_{v_\infty(e)}$ vanishes.

The reason for the vanishing is the following. By mimicking the argument in the paragraph preceding Lemma \ref{lemm:bij}, we can check $\cC_e$ decomposes into $\cC_e = \cC_+ \cup \cC_-$. By arguments in Section \ref{sect:MSPdescript} (d), $d := \deg \cL_1|_{\cC_-} > 0$ and by Lemma \ref{lemm1}, $\deg \cL_1|_{\cC_+} = -d < 0$. Therefore $\phi_i |_{\cC_+} \equiv 0$ so that $\phi_i$ vanishes over $\cC_e \cap \cC_{v_\infty}(e) = \cC_+ \cap \cC_{v_\infty}(e)$.

\begin{caut}
Note that in general, we cannot directly construct a morphism $\cW_{\Gamma} \to \cW_{\Gamma'}$ because universal MSP field over $\cW^\mu_{\Gamma}$ is not guaranteed to vanish over the divisor $R_{\ell}$ of the universal curve $\cC_{\Gamma}$ over $\cW_\Gamma$, where $R_{\ell}$ is the image of the section corresponding to the marking assigned to the new leg $\ell$.
\end{caut}

Recall that $\cW_{\Gamma'}^-$ denotes the degeneracy locus of the cosection $\sigma_{\Gamma} : Ob_{\cW_{\Gamma'}} \to \cO_{\cW_{\Gamma'}}$. 
Then we define $\cW_{\Gamma}^\sim := \cW_{\Gamma}^{\mu}  \times_{\cW_{\Gamma'} } \cW_{\Gamma'}^-$ and let $\wtil{\kappa} : \cW_{\Gamma}^\sim \lra \cW^-_{\Gamma'}$ be the induced morphism. Also, parallel to \cite[Section 5]{CL20van}, we can check $\kappa$ and $\wtil{\kappa}$ are flat morphisms.
Then, the following proposition is obtained by a parallel arguments of \cite[Proof of Proposition 5.6]{CL20van}.
\begin{prop}\label{prop:strforgetvirt} $\cW^\sim_\Gamma$ is proper and contains $\cW_{\Gamma}^-$ as a closed substack. Let $j : \cW_{\Gamma}^- \to \cW_{\Gamma}^\sim$ be the inclusion. Then, there exists a rational number $c \in \QQ$ such that
\begin{align}\label{eq:strforgetvirt}
j_*\left[ \cW_{\Gamma} \right]\virt\loc = c \cdot \wtil{\kappa}^* \left[ \cW_{\Gamma'} \right]\virt\loc \in A_*(\cW_{\Gamma}^\sim).
\end{align}
\end{prop}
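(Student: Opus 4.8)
The plan is to follow the proof of \cite[Proposition 5.6]{CL20van} essentially line by line, isolating the string edge $e$ from the rest of $\Gamma$, and to keep track of the two features absent in the quintic case: the extra data $(\cL_2,\theta)$, and the residual $\mu_3$--ambiguity in the gluing of $\cL_1$ at the node $\cC_e\cap\cC_{v_\infty(e)}$. First I would dispatch the topological claims. The stack $\cW_{\Gamma'}^-$ is proper, being closed in the proper stack $\cW^-$ exactly as recalled at the start of Section~\ref{sect:insepvan}. Next, $\kappa:\cW_\Gamma^\mu\to\cW_{\Gamma'}$ is flat (as stated) and proper: by the analysis preceding Lemma~\ref{lemm:bij}, the restriction of a field in $\cW_\Gamma^\mu$ to $\cC_e$ is, after passing to the reduced string moduli, a point of $\cW_{\bar e}^\mu$, and the bijective morphism $(g_1,g_2)$ of Lemma~\ref{lemm:bij} exhibits $\cW_{\bar e}^\mu$ as proper over $\PP^2$ via $g_1$, with fibre the $\PP^2$ of $\phi$--values at $q_0$; since $\cW_\Gamma^\mu$ is, up to a $\mu_3$--gerbe, the fibre product of $\cW_{\Gamma'}$ with $\cW_{\bar e}^\mu$ over this $\PP^2$ (matching $g_1$ with the evaluation of $\bar\theta$ at the new $(1,\phi)$--leg $\ell$, the vanishing of $\phi$ at the gluing node being automatic), $\kappa$ is a base change of $\cW_{\bar e}^\mu\to\PP^2$ and hence proper. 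Therefore $\wtil\kappa$, a base change of $\kappa$, is proper, so $\cW_\Gamma^\sim$ is proper over $\cW_{\Gamma'}^-$. For the containment $\cW_\Gamma^-\subset\cW_\Gamma^\sim$ I would argue as in \cite{CL20van}: on the degeneracy locus the string component of every field lies in $\cW_{\bar e}^\mu$ (so $\cW_\Gamma^-\subset\cW_\Gamma^\mu$), and because the cosections $\sigma_\Gamma$ and $\sigma_{\Gamma'}$ are compatible along $\kappa$ in the sense of Section~\ref{sect:decouptrim}, $\kappa(\cW_\Gamma^-)\subset\cW_{\Gamma'}^-$; hence $\cW_\Gamma^-$ factors through $\cW_\Gamma^\mu\times_{\cW_{\Gamma'}}\cW_{\Gamma'}^-=\cW_\Gamma^\sim$ as a closed substack.

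For the cycle identity I would work over the proper auxiliary stack $\cW_\Gamma^\sim$, which is necessary precisely because, as the Caution after Lemma~\ref{lemm:lcising} notes, there is in general no morphism $\cW_\Gamma\to\cW_{\Gamma'}$. On $\cW_\Gamma^\mu$ the restriction morphism $\kappa$ does exist, and an octahedral/mapping--cone computation with the $\what{\cD}$--stacks, parallel to the one producing diagram~\eqref{diag:obsdecoup}, identifies the $\kappa$--relative perfect obstruction theory of $\cW_\Gamma^\mu$ with the fibrewise pullback of the obstruction theory of $\cW_{\bar e}$, the extra summands coming from $(\cL_2,\theta)$ and $\cP^{\log}$ over $\cC_e$ being acyclic once the node $\cC_e\cap\cC_{v_\infty(e)}$ is turned into the marking $\ell$. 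By Lemma~\ref{lemm:lcising} together with $\dim\cW_{\bar e}=\vdim\cW_{\bar e}=4$, the stack $\cW_{\bar e}$ is lci of expected dimension, so its virtual class is its ordinary fundamental class; writing $[\cW_{\bar e}]=c\,[\cW_{\bar e}^\mu]$ in $A_*(\cW_{\bar e})$ defines a positive rational number $c$, the generic length of $\cW_{\bar e}$ along its reduction, which need not be an integer because of the residual $\mu_3$ in reconstructing $\cL_1|_{\cC_e}$ from $\mu$, $\rho$ and $\theta$ at the node. Feeding this into the functoriality of cosection--localized virtual cycles under virtual pull--back \cite{Man12, CKL}, and using that the string directions of the cosection are zero or pulled back from $\cW_{\Gamma'}$, yields $j_*[\cW_\Gamma]\virt\loc=c\cdot\wtil\kappa^*[\cW_{\Gamma'}]\virt\loc$ in $A_*(\cW_\Gamma^\sim)$.

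The \textbf{main obstacle} is the obstruction--theory identification along $\kappa$: one must run the analogue of diagram~\eqref{diag:obsdecoup} for the string--forgetting morphism (rather than a node--splitting one), carrying the new summands from $(\cL_2,\theta)$ and $\cP^{\log}$, and verify that their contribution over $\cC_e$ is either acyclic or absorbed by the node becoming a marking — this is exactly where the vanishing of $\phi$ at $\cC_e\cap\cC_{v_\infty(e)}$ and the $\mu_3$--ambiguity enter, and where the constant $c$ is pinned down. A secondary, more routine difficulty is checking that $\kappa$ and $\wtil\kappa$ are genuinely flat (not merely equidimensional) and that this survives the passage to the reduced substack $\cW_{\bar e}^\mu$, for which one repeats the smoothness arguments for $\cD_\Gamma$ and $\what{\cD}_\Gamma$ from Section~\ref{sect:decouptrim}, and that the compatibility of $\sigma_\Gamma$ with $\sigma_{\Gamma'}$ is strong enough to feed into \cite{CKL}. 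Everything else is formal consequence of the virtual pull--back machinery applied exactly as in \cite{CL20van}.
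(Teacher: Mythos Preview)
Your proposal is correct and takes essentially the same approach as the paper: the paper gives no argument beyond stating that the proposition ``is obtained by a parallel argument of \cite[Proof of Proposition~5.6]{CL20van}'', and your plan is precisely to run that parallel argument while tracking the new data $(\cL_2,\theta)$ and the $\mu_3$--gluing ambiguity. Your outline in fact supplies considerably more detail than the paper does, and the obstacles you single out (the obstruction-theory comparison along $\kappa$ and the flatness of $\kappa,\wtil\kappa$) are exactly the points that the citation to \cite{CL20van} is meant to cover.
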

Therefore, if $[\cW_{\Gamma'}]\virt\loc = 0$, we obtain $[\cW_\Gamma]\virt\loc \sim 0$. 
Hence, we can reduce our proof to the case that $\Gamma$ does not have strings. 
In summary, we obtain \eqref{eq:insepvan} as follows.
\begin{proof}[Proof of Equation \eqref{eq:insepvan}]
By the arguments in Section \ref{sect:decouptrim}, \ref{sect:forget}, \ref{sect:redtonost}, it is enough to show that $[\cW_\Gamma]\virt\loc = 0$ for the graphs $\Gamma$ satisfying the special condition in Section \ref{sect:nostring}. So we obtain the proof from \eqref{eq:specialvan}.
\end{proof}



\section{Proof for N-MSP fields}\label{sect:insepvanN}

N-MSP field is a generalization of MSP fields, which is introduced in \cite{CGLL}, and it is used to prove polynomiality of Gromov-Witten potentials and BCOV's Feynman rule of quintic 3-folds in \cite{CGL1, CGL2}.

Here we define N-MSP field by the following data: 
$$\xi = (\cC, \Sigma^{\cC}, \cL_1, \cL_2, \cN, \phi, \theta, \rho, \bm{\mu}=(\mu_1,\dots,\mu_N), \nu )$$ where
\begin{align*}
& \phi=(\phi_1,\phi_2,\phi_3) \in \rH^0( \cC, \cL_1 ^{\oplus 3} ), \ \theta=(\theta_1,\theta_2,\theta_3) \in \rH^0(\cC, \cL_2^{\oplus 3}), \ \rho \in \rH^0(\cC, \cL_1^{-3}\otimes \cL_2^{-3} \otimes \omega^{\log}_{\cC}  ), \\
& \mu_1,\dots,\mu_N \in \rH^0(\cC, \cL_1 \otimes \cN), \nu \in \rH^0( \cC, \cN)
\end{align*}

  The only thing different from the data of MSP field is that we use multiple $\mu$--fields $\bm{\mu}=(\mu_1,\dots,\mu_N)$ instead of the single $\mu$--field.
  Moreover we define the stability condition of N-MSP field $\xi$ to be the same as the stability condition in Definition \ref{cond:stab} by replacing $\mu$ with multiple $\mu$--fields, $\bm{\mu} = (\mu_1,\dots,\mu_N)$. 
  Parallel to the moduli space of MSP fields $\cW_{g,\bold{d},\gamma}$, we define a torus action on the moduli space of N-MSP field as follows. Consider a complex torus $T=(\CC^*)^N$. $T$ acts on the moduli space of N-MSP fields given by the following. For $\bold{t}=(t_1,\dots,t_N) \in T$, we define the action by
\begin{align*}
\bold{t} \cdot (\phi, \theta, \rho, \mu, \nu) & = (\phi, \theta, \rho, t_1\mu_1,\dots,t_N\mu_N, \nu).
\end{align*}

In \cite{CLLL}, the moduli space of stable N-MSP field is constructed as a DM stack in a general setting. Moreover it is proven that it has a natural $T$--equivariant perfect obstruction theory with $T$--invariant cosection $\sigma$ induced from the potential function $F(\bx,\by)\cdot p$, and moreover the degeneracy loci of the cosection $\sigma$ is proper.

As discussed in \cite{CGLL}, for a fixed N-MSP field $\xi \in \cW^T$, we have one more decoration over each vertex in the localization graph $v \in \Gamma_\xi$ called \textbf{`hour'}. That is, a number $\alpha_v \in \set{1,2,\dots, N}$ such that $\mu_i|_{\cC_v} = 0$ for $i \neq \alpha_v$.

Therefore, when we consider a localization graph $\Gamma$ satisfying the special condition, we require in Section \ref{sect:nostring}, the virtual dimension coming from the deformations of $\bm{\mu}$ is equal to the deformation of single $\mu$--field $\mu_{\alpha(v)}$ for each vertex $v \in \Gamma$. Therefore the virtual dimension coming from the deformation of $\bm{\mu},\nu$ is the same as \eqref{eq:vdimcount2}. Therefore, under the special condition we obtain the same conclusion that $\vdim \cW_\Gamma < 0$ and hence $[\cW_\Gamma]\virt\loc=0$.

The results in Section \ref{sect:decouptrim}, \ref{sect:forget}, \ref{sect:redtonost} can be directly generalized to the moduli space of N-MSP fields in an obvious way, as in \cite{CGLL}. As with \eqref{eq:insepvan}, when the graph $\Gamma$ is irregular and not a pure loop, we obtain:
\begin{align}\label{eq:insepvan2}
[\cW_\Gamma]\virt\loc \sim 0.
\end{align}

\section{Appendix: hybrid Landau-Ginzburg theory at infinity}

Consider the special case when a graph $\Gamma$ only consists of one vertex $v$ at infinity, with $k$ legs. Let $\cC_v$ be the corresponding curve and let $\{ x_1,\dots, x_k \}$ be marked points on $\cC_v$ corresponding to $k$ legs. 
Assume there are no $(1,\phi)$ markings on $\cC_v$. Let the monodromy type of $\cL_1$ be $\gamma_v = ( (\zeta_3)^{k-m}, (\zeta_3^2)^{m})$.

The moduli space $\cW_\Gamma$ is the moduli space parametrizing \textbf{mixed data}, 3-spin curve on $\cC_v$ and maps from $\cC_v$ to $\PP^2$ are combined. More precisely, $\cW_\Gamma$ parametrizes the following data:
\begin{enumerate}
\item
$\cC$ is a  genus $g$ orbifold curve with marked points $\Sigma^{\cC} = (x_1,\dots ,x_k)$, where $\Aut(x_i) \cong \mu_3$.
\item
Line bundle  $\cL_2$ over $\cC$ with degree $\deg \cL_2 = d'$,  and a section
$$\theta \in \rH^0(\cC, \cL_2^{\oplus 3})$$ nowhere vanishing, therefore inducing a map $f_{\theta} : \cC \to \PP^2$.
\item
Line bundle $\cL_1$   over $\cC$ with
an isomorphism:
\[
\rho : \cL_1^3 \otimes \cL_2^3 \stackrel{\cong}{\lra} \omega_{\cC,\log};
\]
\item 
$\Aut(x_i) \cong \mu_3$ act on $\cL_1|_{x_i}$ \textbf{faithfully} as:
\[
\zeta = \exp(2 \pi i / 3), \ \zeta \cdot \cL_1|_{x_i} = \zeta^{m_i} \cL_1|_{x_i},
\]
where each $m_i=1\,\text{or} \, 2$; we denote 
  $\gamma=(\zeta^{m_1},\dots, \zeta^{m_k})  $.
  \item
Sections $\phi \in \rH^0(\cC, \cL_1^{\oplus 3})$ 
 
\end{enumerate}
We call this data \textbf{LG}   fields.
We call $\cW_{g,1^{k-m} 2^{m},d'}^{LG}$ the moduli space of  such objects with monodromy type ( $(\zeta_3)^{k-m}, (\zeta_3^2)^{m})$.
Note that (4) implies $\cL_2\cong f_{\theta}\sta \cO(1)$ and $d'\in \ZZ_{\geq 0}$. Thus modulo $3$,
the isomorphism $\rho$ implies constraint
\begin{align}\label{m-type} m \equiv 2g-2 \ (\text{mod}\ 3)\end{align}

The virtual dimension is

$$\vdim \cW_{g,1^{k-m} 2^{m},d'}^{LG} = k.$$

We are particularly interested in the case $k-m=0$. In this case, for each    $m$ satisfying \eqref{m-type}
we define
\begin{align}\label{new-inv} N_{g,m,d'}:=\int_{[\cW_{g,2^{m},d'}^{LG} ]} 1 \in \QQ. \end{align}

 This is the curve counting for the partial-LG space $ ([\CC^{\oplus 3}/\mu_3]\ltimes \PP^2, W)$. For example
 if $g=0$, by  \eqref{m-type} we can have  $m=1,4,7,10,13,\cdots$, the genus zero potential is the series
 $$F_0:=  \sum_{m=1,4,7,\cdots }\sum_{d'=0}^\infty N_{0,m,d'}q_1^{m} q_2^{d'}  .$$

 If $g=1$, by  \eqref{m-type} we can have  $m=0,3,6,9,12,\cdots$, the genus one potential is the series
  $$F_1:=  \sum_{m=0,3,6,\cdots }\sum_{d'=0}^\infty N_{0,m,d'}q_1^{m} q_2^{d'} \ .$$
  
\smallskip

In Quintic $3$-fold case, the theory corresponding to a single vertex $v$ at infinity was Fan-Jarvis-Ruan-Witten(FJRW) theory \cite{FJR1, FJR2}. Recently it has been proved that potential function of FJRW invariants satisfies polynomiality and BCOV's Feynman rule, similar to the case of GW theory. So we think studying this hybrid LG theory will also be interesting.


\begin{thebibliography}{1}


\bibitem[Beh]{Beh97}
Kai Behrend.
\newblock {Gromov-Witten invariants in algebraic geometry.}
\newblock {\em Inventiones Mathematicae}, 127(3):601--617, 1997.

\bibitem[CGL1]{CGL1}
Huai-Liang Chang, Shuai Guo and Jun Li.
\newblock{Polynomial structure of Gromov--Witten potential of quintic 3-folds}
\newblock{\em Annals of Mathematics}, 194(3):585--645, 2021.

\bibitem[CGL2]{CGL2}
Huai-Liang Chang, Shuai Guo and Jun Li.
\newblock{BCOV's Feynman rule of quintic $3 $-folds}
\newblock{\em arXiv preprint arXiv:1810.00394}, 2018.

\bibitem[CGLL]{CGLL}
Huai-Liang Chang, Shuai Guo, Jun Li, and Wei-Ping Li.
\newblock {The theory of N--Mixed-Spin-p fields.}
\newblock {\em Geometry \& Topology}, 25(2):775--811, 2021.

\bibitem[CGLLZ]{CGLLZ}
Huai-Liang Chang, Shuai Guo, Jun Li, Wei-Ping Li, and Yang Zhou.
\newblock{Mixed-Spin-P fields for GIT quotients}
\newblock{\em arXiv preprint arXiv:2312.11174}, 2023.

\bibitem[CKL]{CKL}
Huai-Liang Chang, Young-Hoon Kiem, and Jun Li.
\newblock{Torus localization and wall crossing for cosection localized virtual cycles}
\newblock{\em Advances in Mathematics}, 308:964--986, 2017

\bibitem[CL1]{CL12}
Huai-Liang Chang and Jun Li.
\newblock{Gromov--Witten invariants of stable maps with fields}
\newblock{\em International mathematics research notices}, 2012(18):4163--4217, 2012.

\bibitem[CL2]{CL20van}
Huai-Liang Chang and Jun Li.
\newblock {A vanishing associated with irregular MSP fields.}
\newblock {\em International Mathematics Research Notices},
  2020(20):7347--7396, 2020.
  
\bibitem[CLL]{CLL15}
Huai-Liang Chang, Jun Li, and Wei-Ping Li.
\newblock {Witten’s top Chern class via cosection localization.}
\newblock {\em Inventiones mathematicae}, 200:1015--1063, 2015.

\bibitem[CLLL1]{CLLL2}
\newblock{Mixed-Spin-P fields of Fermat polynomials}
\newblock{\em Cambridge Journal of Mathematics}, 7(3):319--364, 2019.

\bibitem[CLLL2]{CLLL}
Huai-Liang Chang, Jun Li, Wei-Ping Li, and Chiu-Chu~Melissa Liu.
\newblock {An effective theory of GW and FJRW invariants of quintic Calabi--Yau
  manifolds.}
\newblock {\em Journal of Differential Geometry}, 120(2):251--306, 2022.


\bibitem[FJR1]{FJR1}
Huijun Fan, Tyler Jarvis, and Yongbin Ruan.
\newblock{The Witten equation and its virtual fundamental cycle}
\newblock{\em arXiv preprint arXiv:0712.4025}, 2007.

\bibitem[FJR2]{FJR2}
Huijun Fan, Tyler Jarvis, and Yongbin Ruan.
\newblock{The Witten equation, mirror symmetry, and quantum singularity theory}
\newblock{\em Annals of Mathematics}, 178(1):1--106, 2013.

\bibitem[KL]{KL13}
Young-Hoon Kiem and Jun Li.
\newblock {Localizing virtual cycles by cosections.}
\newblock {\em Journal of the American Mathematical Society}, 26(4):1025--1050, 2013.

\bibitem[Man]{Man12}
Cristina Manolache.
\newblock {Virtual pull-backs}
\newblock {\em Journal of Algebraic Geometry}, 21(2):201--245, 2012.

\bibitem[Lei1]{Lei1}
Patrick Lei.
\newblock {MSP theory for smooth Calabi-Yau threefolds in weighted $\PP^4$.}
\newblock {\em arXiv preprint arXiv:2409.11660, 2024.}

\bibitem[Lei2]{Lei2}
Patrick Lei.
\newblock {Higher-genus Gromov-Witten theory of one-parameter Calabi-Yau threefolds I: Polynomiality.}
\newblock {\em arXiv preprint arXiv:2409.11659, 2024.}

\bibitem[Lei3]{Lei3}
Patrick Lei.
\newblock {Higher genus Gromov-Witten theory of one-parameter Calabi-Yau threefolds II: Feynman rule and anomaly equations.}
\newblock {\em arXiv preprint arXiv:2412.06527, 2024.}

\bibitem[May]{May01}
\newblock {The additivity of traces in triangulated categories.}
\newblock {\em Advances in Mathematics}, 163(1):34--73, 2001.





\end{thebibliography}
\bibliographystyle{plain}

\end{document}